\title{\vspace{-0.5cm}Wonderful compactifications of Bruhat-Tits buildings}
\author{\vspace{0cm}Bertrand R\'emy, Amaury Thuillier, and Annette Werner}
\institution{CMLS, \'Ecole polytechnique, CNRS, Universit\'e
Paris-Saclay, F-91128 Palaiseau Cedex}\\
\email{bertrand.remy@polytechnique.edu}}\\
\institution{Universit\'e de Lyon, CNRS, Universit\'e Lyon 1,
Institut Camille Jordan, 43 boulevard du 11 novembre 1918,
F-69622 Villeurbanne cedex}\\
\email{thuillier@math.univ-lyon1.fr}}\\
\institution{Institut f\"ur Mathematik, Goethe-Universit\"at
Frankfurt, Robert-Mayer-Str. 6-8,
D-60325 Frankfurt a.M.}\\
\email{werner@math.uni-frankfurt.de}} }
\date{\vspace{-5ex}} 
\journal{\'Epijournal de G\'eom\'etrie Alg\'ebrique} 
\newtheorem{Thm}{Theorem}[section]
\newtheorem{Prop}[Thm]{Proposition}
\newtheorem{Lemma}[Thm]{Lemma}
\newtheorem{Remark}[Thm]{Remark}
\newcommand{\G}{\mathrm{G}}
\newcommand{\Bb}{\mathcal{B}}
\newcommand{\an}{\mathrm{an}}
\newcommand{\cT}{\mathcal{T}}
\newcommand{\bbart}{\overline{\Bb}(G,k)}
\newcommand{\Thbar}{\overline{\Theta}}
\newcommand{\Gbar}{\overline{G}^\an}
\begin{document}


\maketitle



\begin{prelims}


\def\abstractname{Abstract}
\abstract{Given a split adjoint semisimple group over a local field,
we consider  the maximal Satake-Berkovich compactification of the
corresponding Euclidean building. We prove that it can be
equivariantly identified with the compactification we get by
embedding the building in the Berkovich analytic space associated to
the wonderful compactification of the group. The construction of
this embedding map is achieved over a general non-archimedean
complete ground field. The relationship between the structures at
infinity, one coming from strata of the wonderful compactification
and the other from Bruhat-Tits buildings, is also investigated.}

\keywords{Algebraic group; \ wonderful compactification; \
non-archimedean local field; \ Bruhat-Tits building; \ Berkovich
geometry}

\MSCclass{20G15,
14L15;
14L27;
14L30;
20E42;
51E24;
14G22
}


\languagesection{Fran\c{c}ais}{%

\textbf{Titre. Compactifications magnifiques des immeubles de
Bruhat-Tits} \commentskip \textbf{R\'esum\'e.} \'Etant donn\'e un
groupe adjoint semi-simple d\'eploy\'e sur un corps local, nous
consid\'erons la compactification de Satake-Berkovich maximale de
l'immeuble euclidien correspondant. Nous prouvons qu'elle peut
\^etre identifi\'ee de mani\`ere \'equivariante avec la
compactification obtenue en plongeant l'immeuble dans l'espace
analytique de Berkovich associ\'e \`a la compactification magnifique
du groupe. La construction de ce plongement est effectu\'ee sur un
corps complet non-archim\'edien g\'en\'eral. La relation entre les
structures \`a l'infini, l'une venant des strates de la
compactification magnifique et l'autre des immeubles de Bruhat-Tits,
est \'egalement \'etudi\'ee.}

\end{prelims}


\newpage

\setcounter{tocdepth}{1}
\tableofcontents

\section*{Introduction}
\label{s - intro}

In this paper, we are interested in compactifications of algebraic
groups and of some of their related geometries. By ``related
geometries" we mean ``symmetric spaces" and this terminology can be
understood in at least two different ways. The first one is purely
algebraic and does not require any topological assumption on the
ground field: a symmetric space is then the homogeneous space given
by the quotient of an adjoint semisimple group by the identity
component of the fixed-point set of an involution; the prototype for
such a space is ${(G \times G)  / {\rm diag}(G)}$ where ${\rm
diag}(G)$ is the diagonal $\{ (g,g) : g \in G \}$. The second
meaning makes sense when the ground field is endowed with a complete
non-archimedean absolute value; then we investigate a Euclidean
building, as given by the Bruhat-Tits theory of reductive groups
over valued fields (see \cite{BruhatTits1} and \cite{BruhatTits2}).

\vskip 1.4mm

To each of the two kinds of symmetric spaces corresponds at least
one compactification procedure. The main question of this paper is
to understand, when $k$ is a non-archimedean local field, the relationship between the (so-called wonderful) projective variety
compactifying the symmetric space ${(G \times G) / {\rm diag}(G)}$
and the Satake-Berkovich compactifications of the associated
Bruhat-Tits building $\Bb(G,k)$, as previously constructed by
Berkovich in \cite{Berkovich} and  by the authors in \cite{RTW1} and \cite{RTW2}. The first space is useful for instance for the
algebraic representation theory of the group $G$ while the second
one, relevant to the analogy with the Riemannian symmetric spaces of
real Lie groups, is useful for the analytic representation theory
of, and the harmonic analysis on, the group $G(k)$.

\vskip 1.4mm

Let us be more precise and consider a split semisimple group of
adjoint type $G$ over some field $k$. Wonderful compactifications
were initially constructed by representation-theoretic methods (see
\cite{ConciniProcesi}, \cite{Strickland} and \cite{ConciniSpringer})
but can now be also constructed by using Hilbert schemes (see
\cite{BrionJAG} and \cite{BrionCMH}). We adopt the latter viewpoint
in the core of the paper, but use the former one in this
introduction for simplicity. Let $\rho: G \rightarrow \mbox{GL} (V)$
be an irreducible representation defined over $k$, assumed to have
regular highest weight (strictly speaking, one has to choose a bit
more carefully the linear representation $\rho$ in positive
characteristic -- see \cite[Lemma 1.7 and Sect.
3]{ConciniSpringer}). The projective space $\mathbb{P} \bigl( {\rm
End}(V) \bigr)$ is a $G \times G$-space for the action defined by:
$(g,g').M = gM(g')^{-1}$ for $g,g' \in G$ and $M \in {\rm End}(V)$.
Then the closure $\overline G$ of the orbit of $[{\rm id}_V]$ is the
wonderful compactification of ${(G \times G)  / {\rm diag}(G)}$.
From the very beginning, it was proved by de Concini and Procesi
that the $G \times G$-space $\overline G$ is a smooth projective
variety containing ${(G \times G) /  {\rm diag}(G)}$ as an open
orbit and with remarkable geometric properties. For instance (see
\cite{ConciniProcesi}):
\begin{itemize}
\item The boundary at infinity $\overline G \setminus G$ is a normal
  crossing divisor whose irreducible components $(D_i)_{i \in I}$ are
  indexed by the set $I$ of simple roots of the root system of
  $G$.
\item The $G \times G$-orbits are finite in number, their closures are all smooth, in one-to-one correspondence with the subsets of $I$ and there is one single closed orbit.
\item Each orbit closure fibers over the product of two flag varieties corresponding to two suitable opposite parabolic subgroups; each fiber is the wonderful compactification of the adjoint semisimple quotient of the intersection of the corresponding parabolics.
\end{itemize}

\noindent Roughly speaking, $\overline G$ does not depend on the
chosen representation and its boundary at infinity is not only nice
from the viewpoint of algebraic geometry, but also as a
Lie-theoretic object; in particular, the appearance of wonderful
compactifications of the adjoint semisimple quotients of the various
Levi factors contained in $G$ is a beautiful feature of $\overline
G$.

\vskip 2mm

We assume now that $k$ is a complete non-archimedean valued field
and we deal with the Euclidean building $\Bb(G,k)$ associated to $G$
by Bruhat-Tits theory. In \cite{RTW1}, we define a compactification
$\overline{\Bb}_\tau(G,k)$ of the building $\Bb(G,k)$ for each type
$\tau$  of parabolic subgroup, and in \cite{RTW2} we show that this
finite family of compactifications can be obtained by a suitable
analogue of Satake's compactification of Riemmanian symmetric
spaces. The compactifications we construct make crucial use of
V.~Berkovich's approach to analytic non-archimedean geometry; they
were in fact originally investigated by Berkovich in \cite[Chapter
5]{Berkovich} for split groups. This geometry allows one, and
actually requires, to use possibly huge complete non-archimedean
extensions of $k$; this explains why some of our statements are
given for arbitrary complete non-archimedean valued fields, while
for a Bruhat-Tits building $\Bb(G,k)$ to admit a compactification it
is necessary and sufficient that $k$ be a local (i.e. locally
compact) field. If $k$ is not local, then the topological space
$\overline{\Bb}_\tau(G,k)$ is not compact; however, it contains the
building $\mathcal{B}(G,k)$ as an open dense subset and the closure
of every apartment is compact. Moreover Berkovich theory associates
functorially an analytic space (with good local connectedness
properties) $X^{\rm an}$ to any algebraic $k$-variety $X$ in such a
way that if $X$ is affine, then $X^{\rm an}$ can be identified with
a suitable set of seminorms on the coordinate ring $k[X]$, and if
$X$ is proper then $X^{\rm an}$ is compact.

\vskip 2mm

In this paper, we only consider the compactification associated to
the type of Borel subgroups. It leads to the maximal
compactification among those given by the possible types, and we
denote it by $\bbart$. In \cite{RTW1}, the compactification $\bbart$
is constructed thanks to the possibility to define an embedding map
from $\Bb(G,k)$ to the Berkovich analytic space associated to the
maximal flag variety of $G$. This embedding was constructed first by
embedding the building $\Bb(G,k)$ into the Berkovich space $G^\an$,
and then by projecting to $\mathcal{F}^{\rm an}$, where
$\mathcal{F}$ is the maximal flag variety of $G$. The outcome is a
compactification whose boundary consists of the Bruhat-Tits
buildings of all the semisimple quotients of the parabolic
$k$-subgroups of $G$ \cite[Th. 4.11]{RTW1}, a striking similarity
with the algebraic case of wonderful compactifications of groups
described above.

\vskip 2mm 

In order to relate $\bbart$ to the wonderful
compactification of $G$, a natural  idea would be to use the map
$\Bb(G,k) \to G^\an$  (the first step above) and to replace the
analytification of the fibration $G \to \mathcal{F}$ (the second
step above) by the analytification of the embedding $G
\hookrightarrow \overline G, g \mapsto (g,e)$ into the wonderful
compactification. However, it turns out that the map $\vartheta :
\Bb(G,k) \rightarrow G^\an$ used for compactifying the building is
not suitable for this purpose. We have to replace it by a $G(k)
\times G(k)$-equivariant map $\Theta: \Bb(G,k)\times \Bb(G,k)
\rightarrow G^\an$ also constructed in \cite{RTW1}. This leads to
the desired comparison stated in the following theorem, which is the
main goal of this paper.

\vskip 3mm

\noindent {\bf Theorem.} {\it Let $k$ be a complete non-archimedean
field and let $G$ be a split adjoint semisimple group over $k$.
\begin{itemize}
\item[{\rm (i)}]~There exists a continuous $G(k) \times G(k)$-equivariant map $\overline{\Theta}: \Bb(G,k) \times \bbart \rightarrow \Gbar$. For every point $x$ in $\Bb(G,k)$ the map $\overline{\Theta}(x, -): \bbart \rightarrow \Gbar$ is a $G(k)$-equivariant embedding.
\item[{\rm (ii)}]~
When $k$ is locally compact, this embedding induces a homeomorphism
from the compactified building $\bbart$ to the closure of the image
of $\Bb(G,k) \to G^\an \to \overline G^\an$.
\item[{\rm (iii)}]~The boundaries at infinity are compatible in the
  following sense: given a proper parabolic $k$-subgroup $P$ of type
  $\tau(P)$ in $G$, the Bruhat-Tits building of the adjoint semisimple
  quotient of $P$, which is a stratum of $\bbart$, is sent into the
  analytification of the closed subscheme $\displaystyle \bigcap_{i \notin \tau(P)} D_i$.
\end{itemize}
}

\vskip 3mm

Part (i) is proven in Theorem \ref{thm:comparison}, part (ii) in
Proposition  \ref{prop - geometric.description}. Part (iii) can be
made more precise:~it is known that the intersection $\bigcap_{i
\notin \tau(P)} D_i$ is an orbit closure in the wonderful
compactification $\overline G$, and that it fibers over $G/P$ with
fibers isomorphic to the wonderful compactification of the adjoint
semisimple quotient of $P$. Then the Bruhat-Tits building at
infinity of the adjoint semisimple quotient of $P$ is sent
equivariantly to the analytification of an explicit fiber.

\vskip 2mm

The structure of this paper is as follows. Section 1 recalls the
most useful facts for us on wonderful compactifications, adopting
Brion's viewpoint using Hilbert schemes of products of a faithful
flag variety with itself. Section 2 defines the embedding maps from
Bruhat-Tits buildings to analytic spaces associated to wonderful
varieties. Section 3 investigates the boundaries of the two
compactifications; this is where part (iii) of the theorem above is
proved. Section 4 uses the results on the equivariant compatibility
of the boundaries to prove the identification between the maximal
Satake-Berkovich compactification and the one obtained thanks to
analytic wonderful varieties.

\vskip3mm {\bf Convention.} In this paper $G$ is a split adjoint
semisimple group over a field $k$. The choice of a maximal split
torus $T$ of $G$, with character group $X^*(T)$, provides a root
system $\Phi(T,G) \subset X^*(T)$. In this article, roots are always
seen as functions on $T$ and some suitable affine toric varieties
associated with $T$.

\section{Wonderful compactifications of algebraic groups}
\label{s - wonderful}

In this section, we recall the most important facts we need on
wonderful compactifications. Our main reference for this topic is
Brion's article \cite{BrionJAG}, adopting the viewpoint of Hilbert
schemes.

\vskip 2mm

Wonderful compactifications were initially (and are usually)
constructed by representation-theor\-etic means; this was first done
over an algebraically closed field of characteristic 0 by de Concini
and Procesi \cite{ConciniProcesi}, and then extended by Strickland
to the case of positive characteristic \cite{Strickland}. Brion's
paper establishes, among other things, an identification between the
wonderful compactification $\overline G$ as in the latter two papers
and an irreducible component of the Hilbert scheme ${\rm Hilb}(X
\times X)$ where $X$ is any suitable flag variety of $G$.

\vskip 2mm

Let us be more precise. Let $k$ be a field and let $G$ be a
$k$-split adjoint semisimple group. We choose a parabolic
$k$-subgroup $P$ of $G$ such that the $G$-action on the flag variety
$X=G/P$ is faithful, which amounts to requiring that $P$ does not
contain any simple factor of $G$. As before, we denote by $\overline
G$ the wonderful compactification obtained via an irreducible
representation. The variety $\overline G$ admits a $(G \times
G)$-action $(g,g',\bar g) \mapsto (g,g').\bar g$, which we denote by
$(g,g').\bar g = g \bar g (g')^{-1}$ for $g,g' \in G$ and $\bar g
\in \overline G$. This notation is motivated by the construction of
$\overline G$ itself: given a highest weight module $(V,\rho)$ (e.g.
obtained as in \cite[Lemma 1.7]{ConciniSpringer}), the
compactification $\overline G$ is the closure in $\mathbb{P} \bigl(
{\rm End}(V) \bigr)$ of the $(G \times G)$-orbit of $[{\rm id}_V]$
for the action induced by $(g,g').M = \rho(g)M\rho(g')^{-1}$ for any
$g,g' \in G$ and $M \in {\rm End}(V)$.

\vskip 2mm

We now turn specifically to Brion's approach. Let us denote by
$\overline P$ the closure of $P$ in the complete variety $\overline
G$: this space is stable under the restricted action by $P \times
P$. Let $\mathcal{G}$ be the space $(G \times G) \times^{P \times P}
\overline P$ constructed as the image of the quotient map
$$q : G \times G \times \overline P \twoheadrightarrow (G \times G) \times^{P \times P} \overline P = \mathcal{G}$$
associated to the right $(P \times P)$-action defined by:
$$(p,p').(g,g',\bar p) = (gp,g'p',p^{-1}\bar p p')$$
for all $g,g' \in G$, $p,p' \in P$ and $\bar p \in \overline P$. The
orbit of $(g,g',\bar p)$ for this action is denoted by $[g,g',\bar
p]$.

\vskip 2mm

On the one hand, the right $(P \times P)$-action on $G \times G$ is
free and the map

$$\begin{array}{r r c l}
\pi_{X \times X}: & \mathcal{G} & \to & X \times X \\
& [g,g',\bar p] & \mapsto & [g,g'] = (gP,g'P)
\end{array}$$

\noindent is a locally trivial fibration with fiber $\overline P$.
On the other hand, the $(G \times G)$-action on $\overline G$
restricted to $G \times G \times \overline P$ factors through the
quotient $q$ to give another map with the same source space as
$\pi_{X \times X}$, namely:

$$\begin{array}{r r c l}
\pi_{\overline G} : & \mathcal{G} & \to & \overline G \\
& [g,g',\bar p] & \mapsto & g \bar p (g')^{-1}.
\end{array}$$

\noindent By taking the product, one finally obtains a map

$$\begin{array}{r r c l}
\pi_{X \times X} \times \pi_{\overline G} : & \mathcal{G} & \to & X \times X \times \overline G \\
& [g,g',\bar p] & \mapsto & (gP,g'P,g \bar p (g')^{-1})
\end{array}$$

\noindent which is a closed immersion and which enables one to see
the fibers of $\pi_{\overline G}$ as a flat family of closed
subschemes of $X \times X$ (see \cite[Sect. 2, p. 610]{BrionJAG} for
more details and additional references to previous work by Brion
\cite{BrionCMH}).
The outcome is a $(G \times G)$-equivariant morphism obtained thanks
to the universal property of the Hilbert scheme:

$$\begin{array}{r c c l}
\varphi : & \overline G & \to & {\rm Hilb}(X \times X) \\
& \bar g & \mapsto & (\pi_{X \times X})_*\bigl((\pi_{\overline
G})^*\bar g \bigr)
\end{array}$$

\noindent which, roughly speaking, attaches to any point $\bar g$ of
the wonderful compactification $\overline G$, the following closed
subscheme of the product $X \times X$ of faithful flag varieties:

$$\varphi(\bar g) = \{ (gP,g'P) \, | \, g,g' \in G, \, \bar g \in g \overline P (g')^{-1} \} \subset X \times X.$$

\noindent This description of the images of $\varphi$ comes from the
whole description of the image $(\pi_{X \times X} \times
\pi_{\overline G})(\mathcal{G})$ as an ``explicit" incidence variety
in $X \times X \times \overline G$ [loc. cit.]. It provides an easy
way to compute that $\varphi(1_G)$ is the diagonal subscheme ${\rm
diag}(X)$ in $X \times X$, a point in ${\rm Hilb}(X \times X)$ whose
stabilizer for the induced $(G \times G)$-action is easily seen to
be the diagonal subgroup ${\rm diag}(G)$ of $G \times G$. Therefore,
using the latter facts together with the $(G \times G)$-equivariance
of $\varphi$, one can see $\varphi(\overline G)$ as the space of
degeneracies of the diagonal ${\rm diag}(X)$ in $X \times X$, the
images of the elements of $\displaystyle G = {(G \times G)  / {\rm
diag}(G)}$ being the graphs of the elements $g$ seen as
automorphisms of $X=G/P$. We will use a more detailed understanding
of the boundary points in section \ref{s - boundary}, but can
already quote Brion's comparison theorem \cite[Theorem 3]{BrionJAG}:

\begin{Thm}
\label{th - Brion} Let $\mathcal{H}_{X,G}$ denote the closure of the
$(G \times G)$-orbit of ${\rm diag}(X)$ in ${\rm Hilb}(X \times X)$
endowed with its reduced subscheme structure. Then the map $\varphi$
establishes a $G \times G$-equivariant isomorphism between the
wonderful compactification $\overline G$ and $\mathcal{H}_{X,G}$.
\end{Thm}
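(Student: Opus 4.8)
The plan is to show that the morphism $\varphi : \overline{G} \to \mathrm{Hilb}(X \times X)$ is a closed immersion onto $\mathcal{H}_{X,G}$; since both $\overline{G}$ and $\mathcal{H}_{X,G}$ are reduced (the former being smooth, the latter by fiat), a closed immersion whose image is $\mathcal{H}_{X,G}$ is automatically an isomorphism onto it. First I would check that $\varphi$ is $(G \times G)$-equivariant — this is immediate from the functoriality of the Hilbert scheme construction and the $(G \times G)$-equivariance of the incidence variety $\mathcal{G} \hookrightarrow X \times X \times \overline{G}$ together with its two projections $\pi_{X \times X}$ and $\pi_{\overline{G}}$. Next, I would observe that $\varphi(1_G) = \mathrm{diag}(X)$, as computed above, so equivariance forces $\varphi$ to carry the open orbit $G = (G \times G)/\mathrm{diag}(G)$ isomorphically onto the $(G \times G)$-orbit of $\mathrm{diag}(X)$ in $\mathrm{Hilb}(X \times X)$ — here one uses that the stabilizer of $\mathrm{diag}(X)$ is exactly $\mathrm{diag}(G)$, which is the content of the faithfulness of the $G$-action on $X = G/P$. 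Consequently $\varphi(\overline{G})$, being closed and $(G \times G)$-stable and containing this orbit, contains its closure $\mathcal{H}_{X,G}$; and since $\overline{G}$ is the closure of the open orbit $G$ and $\varphi$ is a morphism of varieties, $\varphi(\overline{G}) \subseteq \overline{\varphi(G)} = \mathcal{H}_{X,G}$. Hence $\varphi(\overline{G}) = \mathcal{H}_{X,G}$ set-theoretically.

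The substance of the argument is then to prove that $\varphi$ is injective on points and, better, a closed immersion. For injectivity on the open orbit there is nothing to do; the real work is at the boundary $\overline{G} \setminus G$. Here I would use the explicit description of the subscheme $\varphi(\bar g) = \{ (gP, g'P) : \bar g \in g\overline{P}(g')^{-1} \}$ coming from the incidence-variety picture: one must show that the pair $(\overline{G}, \varphi)$ separates boundary points, i.e. that distinct $(G \times G)$-orbits in $\overline{G}$ map to distinct orbits in $\mathrm{Hilb}(X \times X)$ and that $\varphi$ is injective within each orbit. The orbit structure of $\overline{G}$ is known — orbits are indexed by subsets of the simple roots $I$, with orbit closures the $\bigcap_{i \notin J} D_i$ — so one checks, orbit by orbit, that the associated flat family of subschemes of $X \times X$ distinguishes the points. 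This is most cleanly done by a local computation near the unique closed orbit: choosing the standard affine chart of $\overline{G}$ around the base point of the closed orbit (a toric-type chart with coordinates given by the simple roots, on which the $(G \times G)$-action is explicit), one computes $\varphi$ directly and verifies that the induced map on this chart is a locally closed immersion, then propagates this by equivariance. To upgrade injectivity to "closed immersion" it then suffices, since $\overline{G}$ is complete hence $\varphi$ is proper with finite fibers that are in fact singletons, to check that $\varphi$ is unramified, i.e. that the differential $d\varphi$ is injective at every point — again reduced by equivariance to the points of the closed orbit, where it is the same chart computation showing the Jacobian of $\varphi$ in the simple-root coordinates is injective.

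The main obstacle I expect is precisely this local analysis at the boundary: one needs a sufficiently concrete model of the wonderful compactification near its closed orbit (the de Concini–Procesi "big cell" with its $T$-toric coordinates indexed by the simple roots, together with the opposite unipotent directions) and a matching concrete description of the Hilbert-scheme point $\varphi(\bar g)$ as a subscheme — not just as a set — of $X \times X$, so that one can read off both the injectivity and the non-vanishing of the differential. Equivalently, and perhaps more efficiently, one can invoke that $\overline{G}$ is smooth and $\mathcal{H}_{X,G}$ is the reduced closure of the orbit, reduce to showing $\varphi$ is birational (clear, being an isomorphism on the open orbit) and quasi-finite (to be checked), and then use Zariski's main theorem: a birational, quasi-finite morphism from a normal variety onto $\mathcal{H}_{X,G}$ is an open immersion onto its image, which being proper is all of $\mathcal{H}_{X,G}$, hence an isomorphism. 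With this route the only real point left is quasi-finiteness of $\varphi$ on the boundary, which again comes down to the incidence-variety description: two boundary points $\bar g_1 \ne \bar g_2$ lying in the same fiber would force $g\overline{P}(g')^{-1}$ to contain both for exactly the same set of pairs $(gP, g'P)$, and one rules this out using the normal-crossing structure of $\overline{P} \setminus P$ inside $\overline{G}$ and the fibration of orbit closures over products of flag varieties described in the introduction. Either way, the geometric input on $\overline{G}$ recalled in Section 1 — smoothness, the orbit/boundary stratification, the behaviour of $\overline{P}$ — is exactly what makes the argument go through, and I would lean on Brion's original treatment in \cite{BrionJAG} for the chart-level verification rather than redo it here.
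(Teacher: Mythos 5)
You should first note that the paper does not prove this statement at all: it is quoted verbatim from Brion (\cite[Theorem 3]{BrionJAG}), and the only argument the paper itself supplies is the remark following the theorem, namely that Brion works over an algebraically closed field while the construction of $\varphi$ is natural enough that, by faithfully flat descent, the isomorphism holds for a split adjoint semisimple group over an arbitrary field $k$. Your outline is broadly consistent with how such a proof goes (equivariance of $\varphi$, the computation $\varphi(1_G)={\rm diag}(X)$, identification of the open orbit, reduction by equivariance to a local analysis at the closed orbit in the chart $\overline G_0\simeq U_-\times Z\times U_+$), but at the decisive point --- the chart-level verification that $\varphi$ is an immersion along the boundary --- you explicitly defer to \cite{BrionJAG}. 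So as a proof your proposal reduces to the same citation the paper makes, while omitting the one step the paper actually has to add: the passage from $\overline{k}$ to the ground field $k$ over which the theorem is used in the rest of the article.

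In the parts you do argue there are two concrete problems. First, the shortcut via Zariski's main theorem has the normality hypothesis on the wrong side: ZMT makes a quasi-finite, separated, birational morphism an open immersion when the \emph{target} is normal, whereas here only the source $\overline G$ is known to be normal, and normality of $\mathcal{H}_{X,G}$ is not available in advance (it is a consequence of the theorem, not an input). A birational bijection from a smooth variety onto a non-normal target (e.g. the normalization of a cuspidal curve) is not an isomorphism, so quasi-finiteness plus birationality does not suffice and you are thrown back on the unramifiedness computation of your first route; only the argument ``universally injective $+$ unramified $+$ proper $\Rightarrow$ closed immersion, whose image is then the reduced orbit closure'' is sound. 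Second, the claim that the open orbit is carried isomorphically onto the orbit of ${\rm diag}(X)$ requires the \emph{scheme-theoretic} stabilizer of ${\rm diag}(X)$ in $G\times G$ to equal ${\rm diag}(G)$: set-theoretic faithfulness of the $G$-action on $X=G/P$ does not by itself exclude an infinitesimal kernel in positive characteristic, which would make the orbit map inseparable; one must use, at the level of group schemes, that $G$ is adjoint and that $P$ contains no simple factor.
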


\noindent Note that when $G = {\rm Aut}(X)^\circ$ (which is the case
in general \cite{Demazure}), the space $\mathcal{H}_{X,G}$ is also
the irreducible component of ${\rm Hilb}(X \times X)$ passing
through ${\rm diag}(X)$ \cite[Lemma 2]{BrionJAG}. Note also that the
above isomorphism holds for any parabolic $k$-subgroup provided the
associated flag variety is a faithful $G$-space. The Lie-theoretic
construction of the wonderful compactification of $G$ in
\cite{Strickland}, written over an algebraically closed base field
of arbitrary characteristic, applies more generally when $G$ is
\emph{split} over an arbitrary field. In \cite{BrionJAG}, Brion
always works over an algebraically closed field. However, his
construction makes sense over an arbitrary field, and its naturality
allows one to see that, by faithfully flat descent, the statement of
the above theorem remains true for a \emph{split} semisimple group
$G$ of adjoint type over an arbitrary field $k$.


\vskip 2mm

The proof of this theorem uses a lot of knowledge on the structure
of $\overline G$, previously obtained by representation theory (see
\cite{ConciniProcesi}, \cite{Strickland} and
\cite{ConciniSpringer}). In fact, once some standard Lie-theoretic
choices have been made in $G$, the latter considerations exhibit in
the wonderful compactification $\overline G$, as a main tool of
study of it, an explicit open affine subset $\overline G_0 \subset
\overline G$. More precisely, let $T$ be a split maximal $k$-torus
in $G$ with character group $X^*(T)$, let $B_+$ and $B_-$ be two
opposite Borel subgroups containing $T$ and with unipotent radical
$U_+$ and $U_-$, respectively. These choices provide as usual a root
system $\Phi = \Phi(T,G) \subset X^*(T)$ and two opposite subsets
$\Phi_+$ and $\Phi_-$ corresponding to the roots appearing in the
adjoint $T$-action on the Lie algebras of $U_+$ and $U_-$,
respectively. The affine open subset $\overline G_0$ satisfies the
following properties (see for instance \cite[Section 2]{Strickland}
for the original reference in arbitrary characteristic):

\begin{itemize}
\item the subset $\overline G_0$ contains $T$ and is stable under the action by $TU_- \times TU_+$;
\item the closure of $T$ in $\overline G_0$, which we denote by $Z$, is the affine toric variety associated with the semigroup $\langle \Phi^- \rangle$ of $X^*(T)$ spanned by the negative roots;
\item the canonical map $U_- \times Z \times U_+ \rightarrow \overline G$ is an isomorphism onto $\overline G_0$;
\item the subset $Z$ is isomorphic to an affine space of dimension equal to ${\rm dim}(T)$, therefore $\overline G_0$ is isomorphic to an affine space of dimension equal to ${\rm dim}(G)$ since $U_+ \simeq U_- \simeq {\bf A}_k^d$, with $2d = |\Phi|$, as $G$ is split;
\item the $(G \times G)$-orbits in $\overline G$ are in one-to-one correspondence with the $(T \times T)$-orbits in the toric affine variety $Z$.
\end{itemize}

\noindent In what follows, we see the affine space $Z$ as a partial
compactification of the split torus $T \simeq (\mathbb{G}_m)^{{\rm
dim}(T)}$. Moreover there is a simple way to construct a complete
set of representatives of the $(T \times T)$-orbits in $Z$ by 
``pushing to infinity" the diagonal ${\rm diag}(X)$ by suitable
one-parameter subgroups in $T$. For instance, given any regular
one-parameter subgroup $\lambda : \mathbb{G}_m \to T$, the limit
$$\displaystyle \lim_{t \to 0} \, (\lambda(t),1).{\rm diag}(X)$$
exists and is, so to speak, the ``most degenerate degeneracy" of the
diagonal; it is also the point of $Z$ in the unique closed $(G
\times G)$-orbit of $\overline G$.

\vskip 2mm

Roughly speaking, the next section, where our embedding map is
defined, is the Berkovich analytic  counterpart of some of the
previous facts.

\section{Construction of the embedding map}
\label{s - embedding}

We henceforth assume that the field $k$ is complete with respect to
a non-trivial non-archimedean absolute value, and we keep the
adjoint split semisimple $k$-group $G$ as before. A non-archimedean
field extension of $k$ is a field $K$ containing $k$, which is
complete with respect to a non-archimedean absolute value extending
the one on $k$.

Our main goal in this section is to construct an equivariant map
from the Bruhat-Tits building $\Bb(G,k)$ to the Berkovich analytic
space $\overline G^{\rm an}$ associated to the wonderful
compactification $\overline{G}$ of $G$. Note that $\overline G^{\rm
an}$ is compact since $\overline{G}$ is proper. More precisely, we
define a  continuous equivariant map from $\Bb(G,k) \times
\overline{\Bb}(G,k) $ to $\overline G^{\rm an}$. Fixing a special
point in the first coordinate gives the map we aim for. Later, in
section \ref{s - equivariant comparison}, we will show that this map
is an embedding.

\vskip 2mm

Let us first recall some important facts  on Satake-Berkovich
compactifications of buildings (see \cite{RTW1} and \cite{RTW2} for
details). In \cite[Prop. 2.4]{RTW1} we define a morphism $\vartheta:
\Bb(G,k) \rightarrow G^\an$ by associating to each point $x$ in the
building $\Bb(G,k)$ a $k$-affinoid subgroup $G_x$ of $G^\an$ (the
underlying set of $G_x$ is an affinoid domain of $G^{\rm
an}$). The subgroup $G_x$ is an analytic refinement of the integral
structure of $G$ associated to $x$ by Bruhat-Tits theory \cite[4.6
and 5.1.30]{BruhatTits2}. Working in an analytic context, rather
that in a purely algebraic one, has the important advantage that two
distinct points, even in the same facet (i.e. the same cell) of
$\Bb(G,k)$, lead to distinct analytic subgroups. We use the group
$G_x$ to  define the image $\vartheta(x)$ as the unique Shilov
boundary point of $G_x$ \cite[2.4]{Berkovich}. The map $\vartheta$
obtained in this way is a continuous $G(k)$-equivariant injection if
we let $G(k)$ act on $G^\an$ by conjugation \cite[Prop. 2.7]{RTW1}.
The map $\vartheta$ is well-adapted to Bruhat-Tits theory in the
sense that for any non-archimedean extension $K/k$, the group
$G_x(K)$ is the stabilizer in $G(K)$ of $x$, seen as a point in the
building $\Bb(G,K)$.

\vskip 2mm

Unfortunately, we cannot use the natural map $\vartheta$ to define
an embedding towards $\overline G^{\rm an}$ that could be useful for
our purposes. We have to use another one, also constructed [loc.
cit.] thanks to $\vartheta$. It is a continuous morphism
\[\Theta: \Bb(G,k) \times \Bb(G,k) \rightarrow G^\an\]
which can be seen as a map describing a kind of ``relative position"
from one point to another. This viewpoint gives an intuition to
understand why the equivariance relation
\[\Theta(gx, hy) = h \Theta(x,y) g^{-1}\]
is satisfied by $\Theta$ for  all $x, y \in \Bb(G,k)$ and $g,h \in
G(k)$ \cite[Prop. 2.11]{RTW1}. The definition of $\Theta$ is again
an improvement of facts known from Bruhat-Tits theory -- here, the
transitivity properties of the $G(k)$-action on the facets of
$\Bb(G,k)$ -- made possible by the Berkovich analytic viewpoint.
Indeed, this viewpoint is flexible enough to allow the use of
(possibly huge) non-archimedean extensions of $k$ in order to obtain
better transitivity properties. More precisely, for $x, y \in
\Bb(G,k)$ there exists an extension $K/k$ as before and an element
$g \in G(K)$ such that after embedding $\Bb(G,k) $ into $\Bb(G,K)$
we have $g x = y$. Then we define $\Theta(x,y)$ to be the image of
$g \vartheta_K(x)$ under the natural projection from $G_K^\an$ to
$G^\an$, where $G_K$ is the base change of $G$ by $K$, and
$\vartheta_K: \Bb(G,K) \rightarrow G_K^\an$ is the above embedding
over $K$. Note that $\Theta$ is compatible with non-archimedean
field extensions  and that, if $G$ is reductive, we can define the
map $\Theta$ on the extended building of $G$ (which then contains
the building of the semisimple group $[G,G]$ as a factor). Moreover,
by the same Proposition we know that for every point $x_0$ in
$\Bb(G,k)$ the map $\Theta(x_0,-): \Bb(G,k) \rightarrow G^{\an}$ is
a $G(k)$-equivariant injection, where $G(k)$ acts by left
translations on $G^\an$.

\vskip 2mm

The key result for our comparison theorem in section 4 is the
following statement. It gives a map which, when the first argument
is fixed, is eventually shown to be the embedding we are looking
for.

\begin{Prop}
\label{prop - embedding} The map $\Theta: \Bb(G,k) \times \Bb(G,k)
\rightarrow G^\an$ has a continuous extension
\[\Thbar: \Bb(G,k) \times \overline{\Bb}(G,k) \rightarrow \overline G^\an,\]
such that for all $g,h \in G(k)$, $x \in \Bb(G,k)$ and $y \in
\overline \Bb(G,k)$, we have
$$\Thbar(gx, hy) = h \Thbar(x,y) g^{-1}.$$
The map $\Thbar$ is compatible with non-archimedean field
extensions: if $k'/k$ is a non-archimedean extension, then the
natural diagram
\[
\xymatrix{\mathcal{B}(\G,k') \times \overline{\mathcal{B}}(G,k')
\ar@{->}[r]^{\hspace{1cm} \Thbar} &
(\overline{G} \otimes_k k')^{{\rm an}} \ar@{->}[d]^{{\rm pr}_{k'/k}} \\
\mathcal{B}(G,k) \times \overline{\mathcal{B}}(G,k) \ar@{->}[u]
\ar@{->}[r]_{\hspace{1cm} \Thbar} & \overline{G}^{{\rm an}}}\] is
commutative.
\end{Prop}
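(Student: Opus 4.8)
The plan is to reduce everything to the well-understood affine chart $\overline G_0 \simeq U_- \times Z \times U_+$ and a continuity/density argument. First I would recall the structure of the compactified building: $\overline{\Bb}(G,k)$ is the union of the images of the compactified apartments $\overline{A}$, and for a chosen special point $x_0 \in \Bb(G,k)$ the orbit map $g \mapsto g x_0$ together with the limit process along one-parameter subgroups $\lambda:\mathbb G_m \to T$ realizes every boundary point of $\overline A$ as a limit $\lim_{t\to 0}\lambda(t)y$ for suitable $y$ in the apartment. The key computational input is that under $\Theta$, a pair $(x,y)$ lying in a common apartment $A = A(T,k)$ gets sent into the torus $Z^{\an}\subset \overline G_0^{\an}\subset \overline G^{\an}$, and that, roughly, $\Theta(x, \lambda(t)y)$ follows the toric degeneration; so one first constructs $\Thbar$ on $\Bb(G,k)\times \overline A$ by checking that $t \mapsto \Theta(x,\lambda(t)y)$, viewed in the compact space $\overline G^{\an}\supset Z^{\an}$, converges as $t \to 0$ (here $|t|\to 0$ through a suitable filter, using Berkovich points over extensions) to a point of $Z^{\an}$ determined by the face of $\overline{A}$ containing $y$. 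Compactness of $\overline G^{\an}$ (properness of $\overline G$) guarantees a limit exists; the toric description pins it down.

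Next I would globalize. Given $x \in \Bb(G,k)$ and $y \in \overline{\Bb}(G,k)$, pick a non-archimedean extension $K/k$ and $g \in G(K)$ with $g$ moving $x$ into the apartment $A(T,K)$ (this is the same transitivity trick used to define $\Theta$ itself, available after base change); then define $\Thbar(x,y)$ to be the image under ${\rm pr}_{K/k}: (\overline G\otimes_k K)^{\an}\to \overline G^{\an}$ of $g^{-1}\cdot \Thbar_K(gx, \bar y)$, where on the right we use the apartment-level construction over $K$ and $\bar y$ is the image of $y$ in $\overline{\Bb}(G,K)$. One must check this is independent of the choice of $(K,g)$: two such choices differ by an element of the stabilizer, i.e.\ by an element of $G_x(K'')$ over a common extension, and equivariance of $\Theta$ on the dense locus $\Bb(G,k)\times \Bb(G,k)$ forces agreement on the closure by continuity. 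This simultaneously yields the compatibility with field extensions (the defining formula commutes with further base change by construction) and the equivariance formula $\Thbar(gx,hy) = h\,\Thbar(x,y)\,g^{-1}$: it holds on the dense subset $\Bb(G,k)\times\Bb(G,k)$ by the corresponding property of $\Theta$ recalled before the statement, and both sides are continuous in $y \in \overline{\Bb}(G,k)$, so it extends.

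For continuity of the extended map $\Thbar$ on the full product I would argue locally: fix $(x,y)$, move it by a single element $g\in G(K)$ into $A(T,K)\times \overline A(T,K)$, and work in the chart $U_-\times Z\times U_+$. On the apartment the map is explicitly toric, hence continuous there by inspection of the affine toric variety $Z$ associated with the semigroup $\langle \Phi^-\rangle$; continuity in the transverse directions and under the $G$-action follows because the orbit maps $g \mapsto g\cdot(-)$ on $\overline G^{\an}$ are continuous and because $\Thbar_K$ was built to be continuous in both variables over $K$. Finally, pulling back along the continuous ${\rm pr}_{K/k}$ and using that $\Bb(G,k)\hookrightarrow\Bb(G,K)$ is a topological embedding, continuity descends to $k$.

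The main obstacle is the apartment-level limit computation: one has to verify \emph{that} the net $\Theta(x,\lambda(t)y)$ actually converges in $\overline G^{\an}$ and \emph{identify} the limit inside $Z^{\an}$ in a way that is natural in $y$ and matches the stratification of $\overline{\Bb}(G,k)$ by buildings of Levi quotients. Concretely this amounts to understanding how the Shilov-boundary point defining $\vartheta_K(x)$ behaves under the toric degeneration $\lim_{t\to 0}\lambda(t)\cdot$ — i.e.\ relating the Bruhat-Tits integral model at $x$ to the limiting affine toric coordinates on $Z$ — and then checking this is compatible with passing to boundary facets. Everything else (independence of choices, equivariance, extension compatibility) is then a soft density-and-continuity argument once the limit is in hand.
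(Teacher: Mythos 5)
Your overall architecture (explicit behaviour on a compactified apartment, then globalize by equivariance and density) is the right one, but two of your steps hide the actual mathematical content, and one of them is exactly where the work lies. First, the apartment-level step: the paper does not need any compactness-plus-limit-identification argument. Lemma \ref{lemma - explicit formula seminorms} computes $\Theta(x,y)$ for $(x,y)\in A\times A$ as an explicit multiplicative norm on the coordinate ring $k[X^*(T)][(\xi_\alpha)_{\alpha\in\Phi}]$ of the big cell (note: the point lies in $(U_-\times T\times U_+)^{\an}$, \emph{not} in $Z^{\an}$ or $T^{\an}$ as you assert --- the $\xi_\alpha$-coordinates enter nontrivially), by writing $\Theta(x,y)=s\,\Theta(x_0,x_0)\,t^{-1}$ over an extension and using the known formula for $\vartheta(x_0)$. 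Once you have this formula, the extension to $A\times\overline{A}^B$ is immediate: the same expression, read on $k[\langle\Phi^-\rangle][(\xi_\alpha)_{\alpha\in\Phi}]$, makes sense for $y\in{\rm Hom}_{\bf Mon}(\langle\Phi^-\rangle,{\bf R}_{\geqslant 0})$ and is visibly continuous (Proposition \ref{prop - extension.apartment}). You correctly flag this computation as ``the main obstacle'' but do not carry it out, and your identification of the limit point is off target; so the core of the proof is missing from your proposal.

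Second, your claim that ``everything else is a soft density-and-continuity argument'' is not correct. Well-definedness of the glued map --- independence of the standardization $(T,B)$, or in your version of the pair $(K,g)$ --- cannot be extracted from density of $\Bb(G,k)\times\Bb(G,k)$ alone: to compare two chart-level extensions at a boundary point $y$ you must approach $y$ by points lying in the intersection of the two apartments involved, and the existence of such a sequence is the nontrivial Lemma \ref{lemma.sequences}, proved using the explicit description of the stabilizer $G_y(k)$ of a boundary point (generated by $N(k)_y$ and full/partial root groups) and the fact that such elements fix a neighbourhood of $y$ in $\overline{A}^B_y$. Your appeal to ``equivariance of $\Theta$ on the dense locus forces agreement by continuity'' is circular, since each candidate extension is only known to be continuous on its own chart. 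Relatedly, joint continuity of $\overline{\Theta}$ on $\Bb(G,k)\times\overline{\Bb}(G,k)$ does not follow from your local argument (a neighbourhood of $(x,y)$ is not contained in a single translated chart $g^{-1}(A\times\overline{A}^B)$); the paper first proves equivariance, using the Bruhat-type decomposition $G(k)=G_x(k)NG_x(k)$ for compactified buildings, and then obtains continuity from the fact that $\bigl(G(k)\times G(k)\bigr)\times(A\times\overline{A}^B)\to\Bb(G,k)\times\overline{\Bb}(G,k)$ is a topological quotient map. Your order of deduction (equivariance from continuity by density) could in principle be made to work, but only after well-definedness and global continuity are established by arguments of the above kind, which your proposal does not supply.
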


The rest of the section is dedicated to the proof of this statement.
We start with some auxiliary results.

\vskip 2mm

Consider a maximal split torus $T$ of $G$ and a Borel subgroup $B$
of $G$ containing $T$: such an inclusion $T \subset B$ will
henceforth be called a {\it standardization} in $G$. We fix a
standardization $(T,B)$ in $G$. The Borel group $B$ gives rise to an
order on the root system $\Phi = \Phi(T,G)$ inside the character
group $X^*(T)$ of $T$, and we denote the corresponding set of
positive (resp. negative) roots by $\Phi^+$ (resp. $\Phi^-$).
Moreover, we denote the associated unipotent subgroups by $U_+ =
\prod_{\alpha \in \Phi^+} U_\alpha$ and $U_- = \prod_{\alpha \in
\Phi^-}U_\alpha$; they are the unipotent radicals of $B$ and of its
opposite with respect to $T$, respectively.

\vskip 2mm

We denote by $A$ the apartment associated to $T$ in $\Bb(G,k)$: by
Bruhat-Tits theory, it is an affine space under the real vector
space $X_*(T) \otimes_{\bf Z} {\bf R}$ where $X_*(T)$ is the
cocharacter group of $T$, but in accordance with \cite{RTW1} we will
see it as an affine space under $\Lambda = {\rm Hom}_{\bf
Ab}(X^*(T),{\bf R}_{>0})$, using the multiplicative convention for
the sake of compatibility with later seminorm considerations. Now,
we pick a special point $x_0$ in $A$ and we consider the associated
{\it \'epinglage} \cite[3.2.1-3.2.2]{BruhatTits2}: this is a
consistent choice of coordinates $\xi_\alpha : U_\alpha \,
\tilde{\rightarrow} \, {\mathbb G}_{a,k}$ for each root $\alpha$,
which identifies the filtration of the root group $U_\alpha$ with
the canonical filtration of ${\mathbb G}_{a,k}$. Thus we get an
isomorphism between the big cell $U_- \times T \times U_+$ and the
spectrum of the $k$-algebra $k[X^*(T)][(\xi_\alpha)_{\alpha \in
\Phi}]$. We also use $x_0$ to identify the apartment $A$ with
$\Lambda$: thus there is a natural pairing $\langle \; , \; \rangle$
between $A$ and $X^\ast (T)$, which we can restrict to a pairing
between $A$ and $\Phi$.

\vskip 2mm

At last, we recall that the underlying space of the analytic space
associated to an affine $k$-variety $V$ is the set of multiplicative
seminorms $k[V] \to {\bf R}$, defined on the coordinate ring of $V$
and extending the absolute value of $k$. Therefore a point in the
analytic big cell $(U_- \times T \times U_+)^{\rm an}$ is a
multiplicative seminorm on the $k$-algebra
$k[X^*(T)][(\xi_\alpha)_{\alpha \in \Phi}]$.

\vskip 2mm

We first show an explicit formula for the restriction of $\Theta$ to
$A \times A$.

\begin{Lemma}
\label{lemma - explicit formula seminorms} We use the notation
introduced above. For each $(x,y) \in A \times A$, the point
$\Theta(x,y)$, a priori in $G^{\rm an}$, actually lies in $(U_-
\times T \times U_+)^\an$. It is given by the following
multiplicative seminorm on the coordinate ring
$k[X^*(T)][(\xi_\alpha)_{\alpha \in \Phi}]$ of $U_- \times T \times
U_+$:
\[\sum_{\chi \in X^*(T), \nu \in {\bf N}^\Phi} a_{\chi, \nu} \chi \xi^\nu
\mapsto \max_{\chi,\nu} |a_{\chi,\nu}| \langle y,\chi \rangle
\langle x,\chi\rangle^{-1} \prod_{\alpha \in \Phi_-} \langle
y,\alpha\rangle^{\nu(\alpha)} \prod_{\alpha \in \Phi_+} \langle
x,\alpha\rangle^{\nu(\alpha)}.\]
\end{Lemma}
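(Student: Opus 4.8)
Recall how $\Theta$ is built from $\vartheta$: for $x,y\in\Bb(G,k)$ one picks a non-archimedean extension $K/k$ and an element $g\in G(K)$ with $g\cdot x=y$ in $\Bb(G,K)$, and sets $\Theta(x,y)=\mathrm{pr}_{K/k}\bigl(g\cdot\vartheta_K(x)\bigr)$, where $g$ acts on $G_K^{\an}$ by left translation. When $x,y\in A$ this translation can be realised inside the torus: after enlarging the value group of $K$ one may take $g=t\in T(K)$ with $|\chi(t)|=\langle y,\chi\rangle\langle x,\chi\rangle^{-1}$ for all $\chi\in X^*(T)$ (such a $t$ exists because the roots span $X^*(T)\otimes{\bf Q}$, and it is unique up to the affinoid unit torus). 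Thus the proof reduces to (a) an explicit description of $\vartheta_K(x)$ for $x\in A$ and (b) the effect on that seminorm of left translation by $t\in T(K)$.

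For (a): by the construction in \cite[\S2]{RTW1}, for $x\in A$ the affinoid subgroup $G_x$ meets the big cell in the polydisc $\{\,|\chi|=1\ (\chi\in X^*(T)),\ |\xi_\alpha|\le\langle x,\alpha\rangle\ (\alpha\in\Phi)\,\}$, so $\vartheta_K(x)$ lies in $(U_-\times T\times U_+)^{\an}$ and is the Gauss point of that polydisc,
\[\sum_{\chi\in X^*(T),\,\nu\in{\bf N}^\Phi}a_{\chi,\nu}\,\chi\,\xi^\nu\ \longmapsto\ \max_{\chi,\nu}|a_{\chi,\nu}|\prod_{\alpha\in\Phi}\langle x,\alpha\rangle^{\nu(\alpha)}\]
on $k[X^*(T)][(\xi_\alpha)_{\alpha\in\Phi}]$; note that $\chi$ contributes trivially, since $G_x\cap T$ does not depend on $x$.

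For (b): as $T$ normalises $U_+$ and $U_-$, left translation $L_t$ preserves the big cell --- for $g=u_-\,s\,u_+$ one has $tg=(tu_-t^{-1})(ts)u_+$ --- and, by the $T$-equivariance of the \'epinglage \cite[3.2.1-3.2.2]{BruhatTits2}, $L_t^{*}$ acts on coordinates by $\chi\mapsto\chi(t)\,\chi$, by $\xi_\alpha\mapsto\alpha(t)\,\xi_\alpha$ for $\alpha\in\Phi_-$, and by $\xi_\alpha\mapsto\xi_\alpha$ for $\alpha\in\Phi_+$. Hence $t\cdot\vartheta_K(x)$ again lies in the big cell and sends $\sum a_{\chi,\nu}\chi\xi^\nu$ to
\[\max_{\chi,\nu}|a_{\chi,\nu}|\,|\chi(t)|\prod_{\alpha\in\Phi_-}|\alpha(t)|^{\nu(\alpha)}\prod_{\alpha\in\Phi}\langle x,\alpha\rangle^{\nu(\alpha)}.\]
Inserting $|\chi(t)|=\langle y,\chi\rangle\langle x,\chi\rangle^{-1}$ and $|\alpha(t)|=\langle y,\alpha\rangle\langle x,\alpha\rangle^{-1}$ and simplifying --- the factors $\prod_{\alpha\in\Phi_-}\langle x,\alpha\rangle^{-\nu(\alpha)}$ cancel against part of $\prod_{\alpha\in\Phi}\langle x,\alpha\rangle^{\nu(\alpha)}$ --- yields precisely the seminorm in the statement. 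This seminorm is the base change to $K$ of a seminorm on $k[X^*(T)][(\xi_\alpha)_{\alpha\in\Phi}]$, so $\mathrm{pr}_{K/k}$ leaves it unchanged; in particular $\Theta(x,y)$ lies in $(U_-\times T\times U_+)^{\an}$. Independence of the choices of $K$ and $t$ follows from the well-definedness of $\Theta$ established in \cite{RTW1} (and directly: replacing $t$ by $t\,t_0$ with $t_0$ in the unit torus affects neither $|\chi(t)|$ nor $|\alpha(t)|$).

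The algebra is routine; the real work is keeping the conventions straight --- the identification $A\cong\Lambda$ via the special point $x_0$, the normalisation of the root-group filtrations (whether $\langle x,\alpha\rangle$ or its inverse enters in (a)), the direction of the $T(K)$-action on $A$, and the matching of the left-translation normalisation of $g\cdot\vartheta_K(x)$ with the equivariance $\Theta(gx,hy)=h\,\Theta(x,y)g^{-1}$ of \cite[Prop.~2.11]{RTW1}. It is precisely this bookkeeping that forces negative roots to be evaluated at $y$ and positive roots at $x$ (rather than the transpose); once the conventions of \cite{RTW1} and \cite{BruhatTits2} are fixed, the computation above goes through verbatim.
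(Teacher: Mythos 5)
Your proposal is correct and follows essentially the same route as the paper: reduce to the explicit formula for $\vartheta$ on the apartment (\cite[Prop.~2.6]{RTW1}), realize the displacement by a torus element over a large non-archimedean extension, and compute the translation on the big-cell coordinates $\chi,\xi_\alpha$ using equivariance and compatibility with field extensions. The only (cosmetic) difference is that you translate $\vartheta_K(x)$ on one side by a single $t'\in T(K)$ with $|\chi(t')|=\langle y,\chi\rangle\langle x,\chi\rangle^{-1}$, whereas the paper writes $\Theta(x,y)=s\,\Theta(x_0,x_0)\,t^{-1}$ with two torus elements based at the special point $x_0$; both computations are the same bookkeeping and yield the stated norm.
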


Note that the seminorm $\Theta(x,y)$ is in fact a norm.

\begin{proof}
To check this formula, we first observe that $\Theta(x_0, x_0)
= \vartheta(x_0)$, so that the desired formula for $\Theta(x_0,x_0)$
follows from \cite[Prop. 2.6]{RTW1}. Given $(x,y) \in A \times A$,
there exist a non-archimedean field extension $K/k$ and points $s,t
\in T(K)$ such that we have $\langle x, \chi\rangle = |\chi(t)|$ and
$\langle y, \chi\rangle = |\chi(s)|$ for any $\chi \in X^*(T)$. By
compatibility of $\Theta$ with non-archimedean field extensions and
$G(K) \times G(K)$-equivariance \cite[Prop. 2.11]{RTW1}, we can
write $\Theta(x,y) = s\Theta(x_0,x_0)t^{-1}$. Since
\[ \chi(s w  t^{-1})
= \chi(s)\chi(t)^{-1} \chi(w)\ \ {\rm and } \quad \xi_\alpha(s v
s^{-1}) = \alpha(s)\xi_{\alpha}(v) \]

\noindent we deduce, for $f = \sum_{\chi,\nu} a_{\chi,\nu} \chi
\xi^\nu \in k[X^*(T)][(\xi_\alpha)_{\alpha \in \Phi}]$, that

\begin{eqnarray*}
  |f|(\Theta(x,y)) & = & |f|(s\Theta(x_0,x_0)t^{-1}) \\
  & = &
  \left|\sum_{\chi,\nu} a_{\chi,\nu} \chi(s)\chi(t)^{-1} \prod_{\alpha
    \in \Phi_-}\alpha(s)^{\nu(\alpha)} \prod_{\alpha
    \in \Phi_+} \alpha(t)^{\nu(\alpha)} \chi \xi^\nu
  \right|(\Theta(x_0,x_0)) \\
  & = & \max_{\chi, \nu}
  |a_{\chi,\nu}|\langle y,\chi\rangle \langle x,\chi\rangle^{-1}
  \prod_{\alpha \in \Phi_-}\langle y,\alpha \rangle^{\nu(\alpha)}
  \prod_{\alpha \in \Phi_+} \langle x,\alpha \rangle^{\nu(\alpha)}.
\end{eqnarray*}
This finishes the proof. \hfill $\Box$ \end{proof}

Let us define a partial compactification of the vector space
$\Lambda = {\rm Hom}_{\bf Ab}(X^*(T),{\bf R}_{>0})$ by embedding it
into
$$\overline{\Lambda}^B = {\rm Hom}_{\bf Mon}(\langle \Phi^- \rangle, {\bf R}_{\geqslant 0}),$$
where $\langle \Phi^-\rangle$ denotes the semigroup spanned by
$\Phi^- = - \Phi^+$ in $X^*(T)$. The affine space $A$ directed by
$\Lambda$ admits a canonical $\Lambda$-equivariant compactification
$\overline{A}^B$ which can be defined as a contracted product :
$$\overline{A}^B = \overline{\Lambda}^B \times^{\Lambda} A = (\overline{\Lambda}^B \times A) / {\rm diag}(\Lambda).$$
The next step is now, for each standardization $(T,B)$, to use the
previous formula in order to extend $\Theta|_{A \times A}$ to a
continuous map $\overline{\Theta}_{(T,B)} : A \times \overline{A}^B
\rightarrow \Gbar$.

\vskip 2mm

For this, we need to provide additional details about wonderful
compactifications, in particular about the affine charts given by
partially compactifying the maximal torus, seen as a factor of the
big cell. More precisely, we use the affine subvariety $\overline
G_0 \simeq U_- \times Z \times U_+$ of $\overline G$ introduced in
section~\ref{s - wonderful}. The difference between the latter
variety and the big cell is that the factor $T \simeq ({\mathbb
G}_m)^{{\rm dim}(T)}$ is replaced by a partial compactification $Z$
which is an affine space of dimension equal to ${\rm dim}(T)$. At
the level of coordinate rings, it means replacing the $k$-algebra
$k[X^*(T)][(\xi_\alpha)_{\alpha \in \Phi}]$ of the big cell, by the
$k$-algebra $k[\langle \Phi^-\rangle][(\xi_\alpha)_{\alpha \in
\Phi}]$ of $\overline G_0$.

\begin{Prop}
\label{prop - extension.apartment} Fix a standardization $T \subset
B$ of $G$ with associated apartment $A$ and partial compactification
$\overline{A}^B$. Then the restriction $\Theta|_{A \times A}$
extends to a continuous embedding
$$\overline{\Theta}_{(T,B)} : A \times \overline{A}^B \rightarrow \Gbar.$$
The map $\overline{\Theta}_{(T,B)}$ actually takes its values in
$(\overline G_0)^{\rm an}$.
\end{Prop}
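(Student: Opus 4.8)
The plan is to produce the extension by writing down, in the coordinate ring $k[\langle \Phi^-\rangle][(\xi_\alpha)_{\alpha \in \Phi}]$ of $\overline G_0$, the ``same'' seminorm formula as in Lemma~\ref{lemma - explicit formula seminorms}, but now allowing the second variable $y$ to range over the boundary points of $\overline A^B$. Concretely, a point of $\overline A^B$ is represented by a pair $(\mu, x) \in \overline\Lambda^B \times A$ modulo $\mathrm{diag}(\Lambda)$; here $\mu \in \mathrm{Hom}_{\mathbf{Mon}}(\langle \Phi^-\rangle, \mathbf{R}_{\geqslant 0})$ may vanish on some negative roots. I would \emph{define}
$$
\overline\Theta_{(T,B)}(x, [\mu, y]) : \sum_{\chi \in \langle \Phi^-\rangle,\, \nu \in \mathbf{N}^\Phi} a_{\chi,\nu}\, \chi\, \xi^\nu \;\longmapsto\; \max_{\chi,\nu} |a_{\chi,\nu}|\, \mu(\chi)\, \langle x,\chi\rangle^{-1} \prod_{\alpha \in \Phi_-} \mu(\alpha)^{\nu(\alpha)} \prod_{\alpha \in \Phi_+} \langle x,\alpha\rangle^{\nu(\alpha)},
$$
with the usual convention that a factor $\mu(\alpha)^0 = 1$ even when $\mu(\alpha) = 0$, and where for interior points ($\mu \in \Lambda$, i.e. $\mu$ nowhere zero) one recovers $\langle y,\cdot\rangle$ and hence the Lemma's formula. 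The first task is to check this is well-defined: it does not depend on the representative $(\mu,y)$ (the diagonal $\Lambda$-action rescales $\mu$ and shifts $y$ compatibly), it is genuinely a multiplicative seminorm on $k[\langle \Phi^-\rangle][(\xi_\alpha)]$ (multiplicativity is the usual Gauss-norm computation, using that $\langle \Phi^-\rangle$ is a cancellative monoid so that the max is attained on a single term of a product), and it extends the absolute value of $k$. This gives a well-defined map $A \times \overline A^B \to (\overline G_0)^{\mathrm{an}} \subset \overline G^{\mathrm{an}}$ agreeing with $\Theta|_{A \times A}$ on the interior.

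Next I would establish continuity. The space $\overline A^B$ carries the topology making $\overline\Lambda^B \times^\Lambda A$ a contracted product, and $(\overline G_0)^{\mathrm{an}}$ carries the Berkovich topology, i.e. the topology of pointwise convergence of seminorms on elements of the coordinate ring. So it suffices to check that for each fixed $f = \sum a_{\chi,\nu}\chi\xi^\nu$ (a finite sum), the scalar $|f|\bigl(\overline\Theta_{(T,B)}(x,[\mu,y])\bigr)$ depends continuously on $(x,[\mu,y])$. Since $x \mapsto \langle x,\chi\rangle$ and $x \mapsto \langle x,\alpha\rangle$ are continuous and positive on $A$, and $\mu \mapsto \mu(\chi)$, $\mu \mapsto \mu(\alpha)$ are continuous on $\overline\Lambda^B$ (by definition of the topology on $\mathrm{Hom}_{\mathbf{Mon}}(\langle\Phi^-\rangle,\mathbf{R}_{\geqslant 0})$ as a subspace of a product of copies of $\mathbf{R}_{\geqslant 0}$), each summand is continuous, and a finite max of continuous functions is continuous. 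One subtlety: near the boundary some factors $\mu(\alpha)$ tend to $0$, but the exponents $\nu(\alpha)$ are fixed nonnegative integers for each monomial of the \emph{fixed} $f$, so each term behaves continuously up to and including the locus where $\mu(\alpha)=0$ — this is exactly why one must pass to the coordinate ring $k[\langle\Phi^-\rangle][(\xi_\alpha)]$ of $\overline G_0$ rather than that of the big cell, where negative exponents on $\chi$ would blow up.

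Finally I would prove injectivity, i.e. that $\overline\Theta_{(T,B)}$ is an embedding onto its image. Here I would reconstruct the pair $(x,[\mu,y])$ from the seminorm. Evaluating on the monomials $\xi_\alpha$ for $\alpha \in \Phi_+$ recovers $\langle x,\alpha\rangle$, hence $x \in A$ (the positive roots span $X^*(T)$ since $G$ is adjoint semisimple). Evaluating on $\xi_\alpha$ for $\alpha \in \Phi_-$ recovers $\mu(\alpha)$, hence the character $\mu$ on $\langle\Phi^-\rangle$; and $\mu$ together with $x$ determines $[\mu,y]$ in the contracted product (one uses $x$ to normalize). Since $A \times \overline A^B$ is locally compact (each closed apartment is compact, by the properties of $\overline{\mathcal B}(G,k)$ recalled in the introduction) and $\overline G^{\mathrm{an}}$ is Hausdorff, a continuous injection with this explicit continuous inverse on the image is a homeomorphism onto its image, i.e. an embedding. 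That $\overline\Theta_{(T,B)}$ extends $\Theta|_{A\times A}$ is built in, and agreement on the interior plus continuity plus density of $A$ in $\overline A^B$ also pins down the extension uniquely. The main obstacle I anticipate is not any single computation but the bookkeeping around the contracted-product description of $\overline A^B$: one must set up the identifications so that the $\mathrm{diag}(\Lambda)$-invariance of the formula, the topology, and the recipe ``$\mu$ replaces $\langle y,\cdot\rangle$'' are all manifestly consistent, and one must be careful that the monoid $\langle\Phi^-\rangle$ (rather than the group it generates) is what appears, matching exactly the coordinate ring of $Z \subset \overline G_0$.
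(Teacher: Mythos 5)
Your proposal is correct and takes essentially the same route as the paper: there, $\overline{\Theta}_{(T,B)}$ is defined by exactly this Gauss-seminorm formula on $k[\langle \Phi^-\rangle][(\xi_\alpha)_{\alpha \in \Phi}]$ (with $\overline{A}^B$ identified via the special point $x_0$ with $\mathrm{Hom}_{\mathbf{Mon}}(\langle \Phi^-\rangle,\mathbf{R}_{\geqslant 0})$, so a single monoid homomorphism $\langle y,\cdot\rangle$ plays the role of your $\mu$ and no separate representative pair is needed), continuity is the same pointwise-in-$f$ argument, and injectivity is obtained, just as you do, by recovering $\langle x,\alpha\rangle$ and $\langle y,\alpha\rangle$ from $|\xi_\alpha|$ for $\alpha\in\Phi^+$ and $\alpha\in\Phi^-$ respectively. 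The only point to fix is notational: as literally written your seminorm depends on $\mu$ alone and hence is not $\mathrm{diag}(\Lambda)$-invariant on representatives $(\mu,y)$; the invariant quantity is $\mu(\chi)\,\langle y,\chi\rangle$, which is exactly what the paper's identification of $\overline{A}^B$ with the monoid-homomorphism space builds in from the start.
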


\begin{proof}
We use again $x_0$ to identify $A$ with ${\rm Hom}_{\bf
Ab}(X^*(T),{\bf R}_{>0})$ and $\overline{A}^B$ with ${\rm Hom}_{\bf
Mon}(\langle \Phi^-\rangle, {\bf R}_{\geqslant 0})$. Thanks to the
formula for the restriction of $\Theta$ to $A \times A$ proven in
Lemma \ref{lemma - explicit formula seminorms}, we can easily extend
this map to a continuous map $\overline{\Theta}_{(T,B)} : A \times
\overline{A}^B \rightarrow \Gbar$ by mapping $(x,y) \in A \times
\overline{A}^B$ to the multiplicative seminorm on the coordinate
ring $k[\langle \Phi^-\rangle][(\xi_\alpha)_{\alpha \in \Phi}]$ of
$\overline G_0$ defined by
$$\sum_{\chi \in \langle \Phi^-\rangle, \nu \in {\bf N}^\Phi} a_{\chi,\nu} \chi \xi^\nu \mapsto \max_{\chi,\nu} |a_{\chi,\nu}| \langle y, \chi \rangle \langle x,\chi\rangle^{-1}
\prod_{\alpha \in \Phi_-} \langle y,\alpha\rangle^{\nu(\alpha)}
\prod_{\alpha \in \Phi_+} \langle x,\alpha \rangle^{\nu(\alpha)}.$$
The right hand side is obviously continuous in $x$ and $y$, hence
$\overline\Theta$ is continuous.

\vskip 2mm

Moreover we have
$$\langle x,\alpha \rangle = |\xi_\alpha|(\overline{\Theta}_{(T,B)}(x,y))^{-1}$$
for each root $\alpha \in \Phi^+$ and
$$\langle y, \alpha \rangle = |\xi_{\alpha}|(\overline{\Theta}_{(T,B)}(x,y))$$
for each root $\alpha \in \Phi^-$. Since $\Phi^+$ spans $X^*(T)$, we
thus can recover $x$ and $y$ from $\overline{\Theta}_{(T,B)}(x,y)$
and therefore $\overline{\Theta}_{(T,B)}$ is injective.
\hfill $\Box$ \end{proof}

\begin{Remark} \label{rmk-standardization}
{\rm We recall here that, according to \cite[Prop. 4.20, (i)]{RTW1},
given any pair $(x,y) \in \Bb(G,k) \times \overline{\Bb}(G,k)$,
there exists a standardization $(T, B)$ such that $(x,y) \in A
\times \overline{A}^B$ for the apartment $A$ given by $T$. In other
words, any $(x,y) \in \Bb(G,k) \times \overline{\Bb}(G,k)$ lies in
the domain of at least one map $\Theta_{(T,B)}$.}
\end{Remark}

Our next task is to verify that the extensions $\Theta_{(T,B)}$ glue
nicely together when the standardization $(T,B)$ varies, in order to
be able to define the map $\overline \Theta$ we seek for.

\vskip 2mm

Let us recall some facts about the compactification $\bbart$. First,
we explain the relationship between the closure of an apartment $A$
in the maximal Satake-Berkovich compactification
$\overline{\Bb}(G,k)$ and the partial compactification
$\overline{A}^B$ we used so far in this section. We introduce the
maximal flag variety  $\mathcal{F} = G/B $ of $G$ (where $B$ is some
Borel subgroup of $G$), and we let  $\lambda: G \rightarrow
\mathcal{F}$ be the corresponding projection. Then the map
$\vartheta_\varnothing = \lambda^\an \circ \vartheta: \Bb(G,k)
\rightarrow \mathcal{F}^\an$ is a $G(k)$-equivariant injection
\cite[Prop. 3.29]{RTW1}. Let $A$ be an apartment in $G$ associated
to the split torus $T$. We denote by $\overline{A}$ the closure of
$\vartheta_\varnothing(A)$ in $\mathcal{F}^\an$: this is a compact
topological space. By \cite[Prop. 3.35]{RTW1}, the subset
$\overline{A}$ is homeomorphic to the compactification of $A$ with
respect to the Weyl fan, i.e. the fan consisting of the cones
\[\mathfrak{C}(P) = \{x \in A: \alpha(x) \leqslant 1 \mbox{ for all } \alpha \in -\Phi(T, P)\} , \]
where $P$ runs over all parabolic subgroups in $G$ containing $T$.
The partial compactification $\overline{A}^B$ of the present paper
is a subset of $\overline{A}$, where only the cone $\mathfrak{C}(B)$
is compactified.

\vskip 2mm

The space $\bbart$ is defined as the image of the map
\[G(k) \times \overline{A} \rightarrow \mathcal{F}^\an, \quad (g,x) \mapsto g x g^{-1}\]
endowed with the quotient topology. If the field $k$ is locally
compact, then $\bbart$ is the closure of the image of $\Bb(G,k)$ in
$\mathcal{F}^\an$ via $\vartheta_\varnothing$ and hence compact
\cite[Prop. 3.34]{RTW1}. At last, the space $\bbart$ is the disjoint
union of all $\Bb(P_{ss},k)$, where $P$ runs over all parabolic
subgroups of $G$, and where $P_{ss}$ denotes the semisimplification
$P / R(P)$ of $P$ \cite[Th. 4.1]{RTW1}.

\begin{Lemma}
\label{lemma.sequences} Let $x$ be a point in
$\overline{\mathcal{B}}(G,k)$. For any two apartments $A$ and $A'$
of $\mathcal{B}(G,k)$ whose closure in $\bbart$ contains $x$, there
exists a sequence of points in $A \cap A'$ which converges to $x$.
\end{Lemma}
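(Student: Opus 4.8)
The plan is to reduce the statement to a structural fact about the single Euclidean building attached to the stratum of $\bbart$ containing $x$, and then to exhibit the required sequence explicitly as a ray running to infinity inside $A\cap A'$.

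First I would locate $x$. By the description of $\bbart$ as $\coprod_P\mathcal{B}(P_{ss},k)$, the point $x$ lies in a unique stratum $\mathcal{B}(Q_{ss},k)$. Since the closure $\overline A$ of an apartment attached to a torus $T$ is the compactification of $A$ with respect to the Weyl fan, one has $\overline A\cap\mathcal{B}(P_{ss},k)=\bar A_P$ (the apartment of $\mathcal{B}(P_{ss},k)$ attached to the image of $T$) when $P\supseteq T$, and $\varnothing$ otherwise. Hence $x\in\overline A$ forces $Q\supseteq T$ and $x\in\bar A_Q$, and likewise $x\in\overline{A'}$ forces $Q\supseteq T'$ and $x\in\bar A'_Q$. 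Thus $x$ lies in two apartments $\bar A_Q$ and $\bar A'_Q$ of the Euclidean building $\mathcal{B}(Q_{ss},k)$.

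The heart of the proof is to produce a point $b\in A\cap A'$ whose transverse position with respect to $Q$ (i.e.\ its image under the projection $A\to\bar A_Q$, which agrees on $A\cap A'$ with the one coming from $A'$) equals $x$. For this I would use the parallel-set structure. Let $\sigma$ be the simplex of the spherical building at infinity $\partial_\infty\mathcal{B}(G,k)$ corresponding to $Q$; since $Q$ contains both $T$ and $T'$, $\sigma$ lies in $\partial_\infty A$ and in $\partial_\infty A'$. Let $\Phi\subseteq A$ and $\Phi'\subseteq A'$ be the fibres over $x$ of the projections $A\to\bar A_Q$ and $A'\to\bar A'_Q$; these are flats with $\partial_\infty\Phi=\partial_\infty\Phi'$ equal to the subsphere spanned by $|\sigma|$ (which is intrinsic), so they are parallel flats in $\mathcal{B}(G,k)$. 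Consequently $A$ and $A'$ both lie in the parallel set $P$ of $\Phi$, and by the structure theory of Euclidean buildings $P$ splits as a product, compatibly with apartments and recovering the Levi picture underlying the stratum $\mathcal{B}(Q_{ss},k)$: one gets $A=\mathbb{E}\times A_Y$, $A'=\mathbb{E}\times A'_Y$ with $\mathbb{E}$ a Euclidean factor and $A_Y$, $A'_Y$ apartments of a building $Y$ that maps onto $\mathcal{B}(Q_{ss},k)$ with $A_Y\mapsto\bar A_Q$, $A'_Y\mapsto\bar A'_Q$; moreover $Y$ itself is a product over $\mathcal{B}(Q_{ss},k)$, so that $A\cap A'=\mathbb{E}\times(A_Y\cap A'_Y)$ contains a point $b$ of transverse position $x$.

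To conclude, pick a vector $v$ in the relative interior of the cone $\mathfrak{C}(Q)$ inside the direction space of $A$; it lies in the direction space of $\Phi$ (the $\mathbb{E}$-factor), so its direction at infinity lies in $\partial_\infty A\cap\partial_\infty A'$, and the geodesic ray $b+\mathbb{R}_{\geqslant 0}v$, being an increasing union of segments with endpoints in $A$ and in $A'$, stays in $A\cap A'$. By the description of the topology of $\overline A$ recalled above, $(b+nv)_{n\geqslant 1}$ runs to infinity in the direction of $\mathfrak{C}(Q)$ while keeping constant image $x$ in $\mathcal{B}(Q_{ss},k)$, hence converges to $x$ in $\bbart$. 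The main obstacle is the middle step, namely matching the parallel-set/splitting description of $P$ with the way $\mathcal{B}(Q_{ss},k)$ and its apartments sit inside $\bbart$ through \cite{RTW1}, so that ``transverse position $=x$'' becomes the product coordinate one needs; an alternative route, closer to the paper's language, is to first observe that $A\cap A'\neq\varnothing$ and contains a sector-face in the direction of $\sigma$ (standard, since $\sigma$ lies at infinity of both apartments), then to compute the $\bbart$-limit of such a sector-face directly from the explicit formula of Lemma \ref{lemma - explicit formula seminorms} in a standardization $(T'',B'')$ with $T''\subseteq Q$ adapted to $A\cap A'$, and to translate the sector-face along its direction space, staying inside the convex set $A\cap A'$, until its limit is $x$ — which is possible precisely because $x\in\bar A_Q\cap\bar A'_Q$.
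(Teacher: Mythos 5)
Your reduction of the problem to finding a point $b\in A\cap A'$ lying over $x$, and the final limit computation along a ray pointing into the interior of $\mathfrak{C}(Q)$, are fine; the genuine gap is in the step that is supposed to produce $b$. The claim that $\partial_\infty\Phi=\partial_\infty\Phi'$ because both equal ``the subsphere spanned by $|\sigma|$, which is intrinsic'' is false: inside $\partial_\infty A$ this subsphere is the set of directions annihilated by all roots of the Levi factor $L$ of $Q$ containing $T$, i.e.\ the union of the faces at infinity attached to the parabolic subgroups whose Levi contains $L$, and this depends on the torus $T$ (hence on the apartment), not only on $\sigma$. Already for $G=\PGL_2$ and $x$ an end of the tree one has $\Phi=A$ and $\Phi'=A'$, two lines sharing the end $x$ but in general not at bounded Hausdorff distance; for $G=\PGL_3$ and $Q$ maximal, $\Phi$ and $\Phi'$ are lines whose second endpoints correspond to the opposites of $Q$ with respect to the two distinct Levi subgroups $L$ and $L'$, hence differ whenever $L\neq L'$. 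So the flats are not parallel, the parallel-set splitting is unavailable, and the existence of $b\in A\cap A'$ with transverse position $x$ --- which is essentially the entire content of the lemma --- is not established. The fallback sketch does not repair this: translating the sector-face along its own direction space does not change its limit in the stratum, because that direction space is exactly what the projection $A\to \bar A_Q$ kills; so you cannot ``move the limit until it is $x$'' unless you already know that $A\cap A'$ meets, or at least approaches, the fibre over $x$ in the Levi-root coordinates, which is again the statement to be proved. (The observation that $A\cap A'$ contains a common sector-face of direction $\sigma$ is correct, but by itself it only yields convergence to the image of its base point, not to $x$.)

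The paper closes precisely this point by group theory rather than CAT(0) geometry: by [RTW1, Prop.\ 4.20(ii)] the stabilizer $G_x(k)$ acts transitively on the compactified apartments containing $x$, so $A'=gA$ with $g\in G_x(k)$; by [RTW1, Th.\ 4.14] such a $g$ is a product of elements of $N(k)_x$ and of (partial) root group elements, whose fixed-point sets in $\overline{A}^B$ are explicit (partially compactified) half-spaces containing $x$; intersecting finitely many of these with the closure $\overline{A}^B_x$ of the fibre $A_x$ shows that $g$ fixes pointwise a neighborhood of $x$ in $\overline{A}^B_x$, hence fixes points of $A_x$ arbitrarily close to $x$, and these points lie in $A\cap gA=A\cap A'$. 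Note that this in particular proves the stronger fact you were aiming at (points of $A\cap A'$ exactly in the fibre over $x$), but it does so using the explicit description of $G_x(k)$; if you want a purely building-theoretic argument you must substitute an input of comparable strength for the incorrect parallelism claim.
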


\begin{proof}
The stabilizer $G_x(k)$ of $x$ in $G(k)$ acts transitively on the
set of compactified apartments containing $x$  \cite[Prop. 4.20
(ii)]{RTW1}, hence we can write $A' = g.A$ with $g \in G_x(k)$. Pick
a standardization $(T,B)$ such that $A = A(T)$ and $x$ belongs to
$\overline{A}^B$ (cf. Remark \ref{rmk-standardization}). The
assertion is trivially true if $x$ belongs to $\mathcal{B}(G,k)$,
hence we may assume that $x$ lies at the boundary of $A$. Then there
exists a proper parabolic subgroup $P$ of $G$ containing $B$ such
that $x$ lies in the boundary stratum $\Bb(P_{ss},k)$ of $\bbart$.

\vskip 2mm

We let $N$ denote the normalizer of $T$ in $G$ and recall that $\Phi
= \Phi(T,G)$. By \cite[Th. 4.14]{RTW1}, the group $G_x(k)$ is
generated by the stabilizer $N(k)_x$ of $x$ in $N(k)$, the full root
groups $U_\alpha(k)$ when the root $\alpha$ belongs to
$\Phi(T,R_u(P))$, and the partial root groups $U_\alpha(k)_{-\log
\alpha(x)}$ for $\alpha \in \Phi(T,L)$, where $L$ is the Levi
subgroup of $P$ containing $Z_G(T) = T$. The group $N(k)$ acts on
$A$ by reflections through root hyperplanes, i.e. affine hyperplanes
parallel to a linear hyperplane of the form $\{u \ | \ \langle
u,\alpha \rangle = 1\}$ in the vector space $\Lambda = {\rm
Hom}(X^*(T),{\bf R}_{>0})$, with $\alpha \in \Phi$. Identifying $A$
and $\Lambda$, it follows that the group $N(k)_x$ fixes each point
of the closure $\overline{A}^B_x$ of the affine subspace
$$A_x = \{y \in A \ | \alpha(y) = \alpha(x) \mbox{ for all } \alpha \in \Phi^- \mbox{such that} \ \alpha(x)\neq 0\}.$$
Consider a root $\alpha \in \Phi$ and an element $u$ in
$U_\alpha(k)$. The action of $u$ on $\mathcal{B}(G,k)$ fixes each
point of the half-space
$$A_u = \{y \in A \ | \ \alpha(y) \geqslant |\xi_\alpha(u)|\}.$$
The closure of the latter in $\overline{A}^B$ is the subspace
$\overline{A}^B_u$ defined by
$$\overline{A}^B_u = \{y \in \overline{A}^B \ | \ \alpha(y) \geqslant |\xi_\alpha(u)|\}  \mbox{  if } \alpha \in \Phi^-$$
and
$$\overline{A}^B_u = \{y \in \overline{A}^B \ \ | \ \ (- \alpha)(y)|\xi_\alpha(u)| \leqslant 1\} \mbox{ if } \alpha \in \Phi^+.$$
In each case, if $\overline{A}^B_u$ contains $x$, then
$\overline{A}_u^B \cap \overline{A}^B_x$ is a neighborhood of $x$ in
$\overline{A}^B_x$.

\vskip 2mm

Now, thanks to the description of $G_x(k)$ recalled above, any given
element $g$ of $G_x(k)$ fixes each point in the intersection of
$\overline{A}^B_x$ with a finite number of (partially) compactified
half-spaces $\overline{A}^B_u$ all containing $x$, hence fixes each
point in some neighborhood $V$ of $x$ in $\overline{A}^B_x$. We
deduce $V \subset \overline{A}^B \cap g\overline{A}^B$ and,
therefore, there exists a sequence of points in $A \cap gA$ which
converges to $x$ in both $\overline{A}^B$ and
$g\overline{A}^B$.~\hfill $\Box$ \end{proof}

\begin{proof}[Proof of Proposition \ref{prop - embedding}.]
We are now
in position to prove successively the properties claimed about the
map $\overline \Theta$.

\vskip 2mm

1) Existence. We first check that the maps
$\overline{\Theta}_{(T,B)}$ glue together nicely. Pick two points $x
\in \Bb(G,k)$ and $y \in \overline{\Bb}(G,k)$. We have to check that
$$\overline{\Theta}_{(T,B)}(x,y) =
\overline{\Theta}_{(T',B')}(x,y)$$ for any two standardizations
$(T,B)$ and $(T',B')$ such that $\overline{A(T)}^B$ and
$\overline{A(T')}^{B'}$ both contain $x$ and $y$. By Lemma
\ref{lemma.sequences}, we can pick a sequence $(y_n)_{n \geqslant
0}$ in $A(T) \cap A(T')$ converging to $y$ in $\overline{A(T)}^B$
and $\overline{A(T')}^{B'}$. We have
$$\overline{\Theta}_{(T,B)}(x,y_n) = \Theta(x,y_n) =
\overline{\Theta}_{(T',B')}(x,y_n)$$ for all $n$, hence
$\overline{\Theta}_{(T,B)}(x,y) = \overline{\Theta}_{(T',B')}(x,y)$
by continuity of $\overline{\Theta}_{(T,B)}$ and
$\overline{\Theta}_{(T',B')}$.

Since any two points $x \in \Bb(G,k)$ and $y \in
\overline{\Bb}(G,k)$ are contained in $\overline{A(T)}^B$ for a
suitable standardization $(T,B)$ (cf. Remark
\ref{rmk-standardization}), this allows us to define the map
$\overline{\Theta}$ by gluing together the maps
$\overline{\Theta}_{(T,B)}$.

\vskip 2mm

2) Equivariance. We now check that the map $\overline{\Theta}$ is
$G(k)\times G(k)$-equivariant, and for this we pick $(x,y) \in
\Bb(G,k) \times \overline{\Bb}(G,k)$ and choose a standardization
$(T,B)$ such that the partially compactified apartment
$\overline{A}^B$ for $A = A(T)$ contains both $x$ and $y$, as well
as a sequence $(y_n)_{n \geqslant 0}$ in $A$ converging to $y$. By
the Bruhat decomposition theorem for compactifed buildings, proved
in \cite[Prop. 4.20]{RTW1}, we can write $G(k) = G_x(k)NG_x(k)$,
where $G_x(k) = {\rm Stab}_{G(k)}(x)$ and $N = {\rm Norm}_G(T)(k)$.
Therefore, it is enough to prove that
$$\overline{\Theta}(gx,y) = \overline{\Theta}(x,y)g^{-1} \mbox{ and } \overline{\Theta}(x,hy) = h\overline{\Theta}(x,y)$$
for  $g$ and $h$ belonging to $G_x(k)$ or  $N$. If $g \in N$, then
$gx \in A$ and therefore \begin{eqnarray*} \overline{\Theta}(gx,y) &
= & \overline{\Theta}_{(T,B)}(gx,y) =  \lim_n
  \Theta(gx,y_n)  =  \lim_n \Theta(x,y_n)g^{-1}\\ &=&
  \overline{\Theta}_{(T,B)}(x,y)g^{-1}  =
  \overline{\Theta}(x,y)g^{-1}.\end{eqnarray*}
If $g \in G_x(k)$, then \begin{eqnarray*} \overline{\Theta}(gx,y) &
= & \overline{\Theta}(x,y)
   =  \overline{\Theta}_{(T,B)}(x,y) \\ &  = & \lim_n \Theta(x,y_n)
  = \lim_n \Theta(gx,y_n)  =  \lim_n \Theta(x,y_n)g^{-1} \\ & =
  & \overline{\Theta}_{(T,B)}(x,y)g^{-1} =  \overline{\Theta}(x,y)g^{-1}.
\end{eqnarray*}
If $h \in N$, then $h\overline{A}^B=\overline{A}^{hBh^{-1}}$ and
\begin{eqnarray*}
  \overline{\Theta}(x,hy) & = & \overline{\Theta}_{(T,hBh^{-1})}(x,hy)  =
  \lim_n \Theta(x,hy_n)  =  \lim_n h\Theta(x,y_n) \\ & = & h
  \overline{\Theta}_{(T,B)}(x,y)  =  h \overline{\Theta}(x,y).
\end{eqnarray*}
If $h \in G_x(k)$, then the points $x=hx$ and $hy_n$ are contained
in the apartment $hA$ and therefore
\begin{eqnarray*}
\overline{\Theta}(x,hy) & = & \overline{\Theta}_{(hTh^{-1},
hBh^{-1})}(x,hy)  = \lim_n \Theta(x,hy_n) =  \lim_n h \Theta(x,y_n)
\\ & = & h \overline{\Theta}_{(T,B)}(x,y) =  h
\overline{\Theta}(x,y).
\end{eqnarray*}

\vskip 2mm

3) Continuity. Let us now prove continuity of $\overline{\Theta}$.
The canonical map
$$\bigl(G(k) \times G(k)\bigr) \times (A \times \overline{A}^B) \rightarrow \Bb(G,k) \times \overline{\Bb}(G,k)$$
identifies the right-hand-side with a topological quotient of the
left-hand-side. By construction and equivariance, the map
$\overline{\Theta}$ is induced by the continuous map
$$\bigl(G(k) \times G(k)\bigr) \times (A \times \overline{A}^B) \rightarrow \Gbar, \ \ \ \bigl((g,h),(x,y) \bigr) \mapsto h\overline{\Theta}_{(T,B)}(x,y)g^{-1},$$
hence it  is continuous.

\vskip 2mm

4) Field extensions. Finally, the map $\overline{\Theta}$ is
compatible with non-archimedean field extensions since this is the
case for each map $\overline{\Theta}_{(T,B)}$ thanks to the formula
used to define it in the proof of Lemma \ref{lemma - explicit
formula seminorms}. \hfill $\Box$ \end{proof}

\section{Analytic strata in boundary divisors}
\label{s - boundary}

In this section, we analyze the compatibility between the boundaries
at infinity of the Satake-Berkovich compactifications of Bruhat-Tits
buildings and of the wonderful compactifications. For this, we need
to recall some facts about the combinatorics and geometry of
boundaries of wonderful compactifications $\overline G$, which
amounts to decomposing the latter varieties into $G \times
G$-orbits. Our general reference is \cite[Section 3, p.
617]{BrionJAG}.

\vskip 2mm

Let ${\rm Par}(G)$ be the scheme of parabolic subgroups of $G$. The
\emph{type} $\tau = \tau(P)$ of a parabolic subgroup $P$ of $G$ is
the connected component of ${\rm Par}(G)$ containing $P$; we denote
by ${\rm Par}_\tau(G)$ this connected component. Since $G$ is split,
each connected component of ${\rm Par}(G)$ contains a $k$-rational
point. Let $\cT = \pi_0\bigl({\rm Par}(G)\bigr)$ denote the set of
types of parabolic subgroups. This set is partially ordered as
follows: given two types $\tau$ and $\tau'$, we set $\tau \leqslant
\tau'$ if there exist $P \in {\rm Par}_\tau(G)(k)$ and $P' \in {\rm
Par}_{\tau'}(G)(k)$ with $P \subset P'$. The minimal type
corresponds to Borel subgroups and the maximal type corresponds to
the trivial parabolic subgroup $G$. This set is also equipped with
an involution $\tau \mapsto \tau^{\rm opp}$ defined as follows: pick
a parabolic subgroup $P \in {\rm Par}_\tau(G)(k)$ as well as a Levi
subgroup $L$ of $P$ and set $\tau^{\rm opp} = \tau(P^{\rm opp})$,
where ${\rm P}^{\rm opp}$ is the only parabolic subgroup of $G$ such
that $P \cap {\rm P}^{\rm opp} = L$. Note that the type $\tau^{\rm
opp}$ is well-defined since $G(k)$ acts transitively by conjugation
on pairs $(P,L)$ consisting of a parabolic subgroup of type $\tau$
and a Levi subgroup $L$ of $P$.

\vskip 2mm

Let us go back now to the problem of decomposing $\overline G$ as
explicitly as possible into $G \times G$-orbits. We pick a
standardization $(T,B)$ of $G$ and use the associated notation as in
the previous section, such as the root system $\Phi$ and its
positive and negative subsets $\Phi^+$ and $\Phi^-$. We let also
$\Delta \subset \Phi_-$ denote the corresponding set of simple roots
and we recall that there is an increasing one-to-one correspondence
between the types of parabolics introduced above and the subsets of
$\Delta$: this map sends the type $\tau$ of a parabolic subgroup $P$
containing $B$ to $\Delta \cap \Phi(T,L)$, where $L$ is the Levi
subgroup of $P$ containing $T$; in particular, the type of Borel
subgroups (resp. of the trivial subgroup $G$) goes to $\varnothing$
(resp. to $\Delta$). The choice of $(T,B)$ gives us the ``partially
compactified big cell" $\overline G_0$, which can be identified with
$U_- \times Z \times U_+$ \cite[Lemmas 2.1 and 2.2]{Strickland} via
the natural open immersion
$$\varphi : U_- \times Z \times U_+ \rightarrow \overline G_0, \ \ (u_-,z,u_+) \mapsto u_- z u_+^{-1}.$$

\noindent One can choose specific $1$-parameter subgroups
$\lambda_\tau : {\bf G}_m \rightarrow T$ defined by
$\alpha(\lambda_\tau)(t) = 1$ if $\alpha \in \tau$ and
$\alpha(\lambda_\tau)(t) = t$ if $\alpha \in \Delta \setminus \tau$.

Each of these cocharacters has a limit at $0$, i.e. extends to a
morphism from ${\bf A}^1_k$ to $Z$. We set
$$e_{(T,B),\tau} = \lim_{t \to 0} \, \lambda_\tau(t) \in Z(k)$$ and we
note that this $k$-rational point can be described by
$$\alpha(e_{(T,B),\tau}) = 0 \ \mbox{ for all }  \alpha \in -\Phi(T,R_u(P))  \quad \mbox{ and } \quad \alpha(e_{(T,B),\tau}) = 1 \mbox{ for all } \alpha \in \Phi(T,L)^-.$$
The points $\{ e_{(T,B),\tau} \}_{\tau \subset \Delta}$ are
extremely useful because they provide a complete set of
representatives:
\begin{itemize}
\item for the $T \times T$-action on the toric affine variety $Z$,
\item for the $G \times G$-action on the wonderful compactification $\overline G$.
\end{itemize}
Therefore we obtain a one-to-one correspondence between these two
sets of orbits.

\vskip 2mm

Let $P$ denote the unique parabolic subgroup of $G$ of type $\tau$
containing $B$ (it can be described as consisting of the elements $g
\in G$ such that the limit $\lambda_\tau(t) g \lambda_\tau(t)^{-1}$
exists as $t \to 0$), and let $L$ be its Levi subgroup containing
$T=Z_G(T)$. Then $P^{\rm opp}$ is the parabolic subgroup in $G$
opposite $P$ with respect to $B$  (it can be described as consisting
of the elements $g \in G$ such that the limit $\lambda_\tau(t) g
\lambda_\tau(t)^{-1}$ exists as $t \to \infty$, and we have $P \cap
P^{\rm opp}=L$). We have the following description of stabilizers:
$${\rm Stab}_{G \times G}(e_{(T,B),\tau}) = {\rm diag}(L) \bigl(
R_u(P) Z(L) \times R_u(P^{\rm opp}) Z(L) \bigr) \subset P \times
P^{\rm opp}.$$ In other words, the wonderful compactification
$\overline G$ has a $G \times G$-equivariant stratification by
locally closed subspaces $\{X(\tau)\}_{\tau \in \cT}$ and each
stratum $X(\tau)$ is a homogeneous space under $G \times G$ which
comes with a $G \times G$-equivariant map
\[\pi_\tau : X(\tau) \rightarrow {\rm Par}_\tau(G) \times {\rm Par}_{\tau^{\rm opp}}(G)\]
sending the point $e_{(T,B),\tau}$ to $(P,P^{\rm opp})$ (note that
this map is well-defined since the stabilizer of $e_{(T,B),\tau}$ is
contained in $P \times P^{\rm opp}$). Moreover, for each point
$(P,P') \in {\rm Par}_\tau(G)(k) \times {\rm Par}_{\tau^{\rm
opp}}(G)(k)$ consisting of two opposite parabolic subgroups with
respect to a common Levi subgroup $L$, the fiber of $\pi_\tau$ over
$(P,P')$ is canonically isomorphic to the adjoint quotient $L/Z(L)$
of $L$.

\vskip 2mm

One can also give an explicit description of the intersection of
$X(\tau)$ with $\overline G_0$ and of the restriction of $\pi_\tau$
to $X(\tau) \cap \overline G_0$. For simplicity, let us write
$\Phi(Q) = \Phi(T,Q)$ for every subgroup $Q$ of $G$ containing the
torus $T$. The stratum $X(\tau)$ intersects the toric variety $Z =
{\rm Spec}k[\langle \Phi^-\rangle]$ along the locally closed
subspace
\[Z(\tau) = \{z \in Z: \alpha(z) = 0 \mbox { for all } \alpha \in -\Phi(R_u(P)) \mbox{ and } \alpha(z) \neq 0 \mbox{ for all } \alpha \in \Phi(L)^-\},\]
i.e. the intersection of the vanishing sets of all negative roots
belonging to the unipotent radical of $P^{\rm opp}$ and the
non-vanishing set of all negative roots belonging to the Levi
subgroup $L$. This stratum $Z(\tau)$ is a principal homogeneous
space under $T/T(\tau)$, where $T(\tau)$ is the subtorus given as
the connected component of the kernel of all $\alpha \in \Phi(L)$,
and $Z(\tau)$  is trivialized by the $k$-rational point
$e_{(T,B),\tau}$. The torus $T(\tau)$ is the center of $L$, hence
$T/T(\tau)$ is the maximal split torus of $L/Z(L)$ induced by $T$.

\vskip 2mm

At last, the following diagram
$$\xymatrix{
\mbox{$\prod\limits_{\alpha \in - \Phi(R_u(P))} \!\!\!U_\alpha
\times \left( \prod\limits_{\alpha \in \Phi(L)^-} \!\!\!U_\alpha
\times Z(\tau) \times \prod\limits_{\alpha \in \Phi(L)^+}
\!\!\!U_\alpha \right) \times \prod\limits_{\alpha \in \Phi(R_u(P))}
\!\!\!U_\alpha$} \ar@{->}[d]_{({\rm pr}_1,{\rm pr}_3)}
\ar@{->}[r]^{\quad \quad \quad \quad \quad \quad \quad \quad \quad
\quad \quad \quad \varphi}& X(\tau) \cap \overline G_0
 \ar@{->}[d]^{\pi_\tau} \\
\prod\limits_{\alpha \in - \Phi(R_u(P))} \!\!\!U_\alpha \times
\prod\limits_{\alpha \in \Phi(R_u(P))} \!\!\!U_\alpha \ar@{->}[r] &
{\rm Par}_\tau(G) \times {\rm Par}_{\tau^{\rm opp}}(G) }$$ where the
bottom horizontal map is $(u_-,u_+) \mapsto (u_- P u_-^{-1}, u_+
P^{\rm opp} u_+^{-1})$, is commutative.

Let $P$ be a parabolic subgroup of $G$ of type $\tau$, and let
$\lambda: G \rightarrow G/P$ be the projection to the associated
flag variety, which is isomorphic to $\mathrm{Par}_\tau(G)$. Recall
the embedding $\vartheta: \Bb(G,k) \rightarrow G^\an$ defined in
\cite[Prop. 2.4]{RTW1}. By composition, we get a map $\vartheta_\tau
= \lambda^\an \circ \vartheta: \Bb(G,k) \rightarrow \mathrm{Par}_\tau(G)^\an$, which is $G(k)$-equivariant
and  independent of the choice of the parabolic $P$ of type $\tau$
by \cite[Lemma 2.13]{RTW1}. If $\tau$ is the type of a Borel
subgroup, we have seen this map under the name $\vartheta_\emptyset$
already in section \ref{s - embedding}.

\begin{Prop}
\label{prop - geometric.description} Let $P$ be any parabolic
subgroup of $G$ of type $\tau (\neq \Delta)$, giving rise to the
boundary stratum $\Bb(P_{ss},k)$ lying in $\bbart \setminus
\Bb(G,k)$. \vskip1mm
\begin{itemize}
\item[{\rm (i)}]~The map $\overline{\Theta}$ sends $\mathcal{B}(G,k) \times \mathcal{B}(P_{ss},k)$ into $X(\tau)^{\rm an}$.
\item[{\rm (ii)}]~We have
$$(\pi_\tau^\an \circ \overline{\Theta})(x,y) = (P,\vartheta_{\tau^{\rm opp}}(x))$$
for all $(x,y) \in \mathcal{B}(G,k) \times \mathcal{B}(P_{ss},k)$.
\item[{\rm (iii)}]~ For every point $x \in \mathcal{B}(G,k)$, the restriction of $\overline{\Theta}(x, \cdot)$ to $\mathcal{B}(P_{ss},k)$ is a continuous embedding.
\end{itemize}
\end{Prop}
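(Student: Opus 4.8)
The plan is to prove the three assertions by reducing everything to the explicit formula for $\overline{\Theta}_{(T,B)}$ on a standardized apartment, as provided by Proposition~\ref{prop - extension.apartment}, and then using the $G(k) \times G(k)$-equivariance of $\overline{\Theta}$ (Proposition~\ref{prop - embedding}) together with the Bruhat decomposition of compactified buildings from \cite[Prop. 4.20]{RTW1} to propagate the conclusions to all of $\mathcal{B}(G,k) \times \mathcal{B}(P_{ss},k)$. First, for (i) and (ii), by equivariance and the transitivity of $G(k)$ on parabolics of a fixed type we may assume $P$ is the standard parabolic of type $\tau$ containing $B$, and we may take $y$ in the standardized partial compactification $\overline{A}^B$, i.e. $y \in Z(\tau)$ after identifying things via $x_0$. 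The point $y$ lying in the stratum $\mathcal{B}(P_{ss},k)$ means precisely that $\langle y,\alpha\rangle = 0$ for $\alpha \in -\Phi(R_u(P))$ and $\langle y,\alpha\rangle \neq 0$ for $\alpha \in \Phi(L)^-$ (this matches the description of the boundary strata of $\bbart$ in terms of the cone compactification recalled before Lemma~\ref{lemma.sequences}, together with the description of $Z(\tau)$). Then plug $y$ into the seminorm formula of Proposition~\ref{prop - extension.apartment}: one reads off that $|\xi_\alpha|(\overline{\Theta}_{(T,B)}(x,y)) = \langle y,\alpha\rangle = 0$ for $\alpha \in -\Phi(R_u(P))$ and is nonzero for $\alpha \in \Phi(L)^-$, so the resulting seminorm factors through the coordinate ring of $Z(\tau)$ inside $\overline G_0$; hence $\overline{\Theta}(x,y)$ lands in $X(\tau) \cap \overline G_0 \subset X(\tau)^{\rm an}$. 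This gives (i).

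For (ii), using the commutative diagram displayed just before the Proposition, the map $\pi_\tau$ restricted to $X(\tau) \cap \overline G_0$ is given by $(u_-,*,u_+) \mapsto (u_- P u_-^{-1}, u_+ P^{\rm opp} u_+^{-1})$ in the big-cell coordinates $U_- \times Z(\tau) \times U_+$. So I need to identify the $U_-$- and $U_+$-components of $\overline{\Theta}_{(T,B)}(x,y)$ as an element of $(\overline G_0)^{\rm an}$. From the seminorm formula, the dependence on $x$ comes only through the factors $\langle x,\chi\rangle^{-1}$ and $\prod_{\alpha \in \Phi_+}\langle x,\alpha\rangle^{\nu(\alpha)}$, and the dependence on $y$ only through $\langle y,\chi\rangle$ and $\prod_{\alpha \in \Phi_-}\langle y,\alpha\rangle^{\nu(\alpha)}$; comparing with the expression $\overline{\Theta}(x,y) = s\,\vartheta(x_0)\,t^{-1}$ from the proof of Lemma~\ref{lemma - explicit formula seminorms}, the $U_-$-coordinate is governed by $x$ and the $U_+$-coordinate by $y$ (or vice versa, depending on the sign conventions of the \'epinglage --- this bookkeeping needs care). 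For $y \in Z(\tau)$ the relevant $U_-(R_u(P))$-coordinate degenerates so that the first projection of $\pi_\tau \circ \overline{\Theta}(x,y)$ is constant equal to $P$; the remaining $U$-coordinate recovers $\vartheta_{\tau^{\rm opp}}(x) = \lambda^{\rm an}_{\tau^{\rm opp}} \circ \vartheta(x)$, by definition of $\vartheta$ on the apartment $A$ (cf. \cite[Prop. 2.4]{RTW1}) and the fact that $\vartheta_{\tau^{\rm opp}}$ is independent of the chosen parabolic of type $\tau^{\rm opp}$ \cite[Lemma 2.13]{RTW1}. Both sides of the claimed identity in (ii) are $G(k) \times G(k)$-equivariant in the appropriate sense --- the left side by Proposition~\ref{prop - embedding} and $G(k)$-equivariance of $\vartheta_{\tau^{\rm opp}}$, the right side by $G(k)$-equivariance of $\pi_\tau^{\rm an}$ and of $\vartheta$ --- and they agree on the standardized piece $A \times Z(\tau)$; by the Bruhat decomposition $G(k) = G_x(k) N G_x(k)$ for compactified buildings, this agreement on one apartment suffices to conclude everywhere, exactly as in step 2 of the proof of Proposition~\ref{prop - embedding}.

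For (iii), fix $x \in \mathcal{B}(G,k)$. Continuity of $\overline{\Theta}(x,\cdot)$ on $\mathcal{B}(P_{ss},k)$ is immediate from continuity of $\overline{\Theta}$. For injectivity, I would use (ii): since $\pi_\tau^{\rm an} \circ \overline{\Theta}(x,\cdot)$ has constant value, the relevant information is carried in the fibre of $\pi_\tau$ over $(P,\vartheta_{\tau^{\rm opp}}(x))$, which by the last sentence before Proposition~\ref{prop - geometric.description} is the adjoint quotient $L/Z(L) \simeq P_{ss}$; concretely, working in the big-cell coordinates and keeping $x$ fixed, the map $y \mapsto \overline{\Theta}(x,y)$ on the standardized piece $Z(\tau)$ is given by $y \mapsto$ the seminorm whose values on the $\xi_\alpha$ for $\alpha \in \Phi(L)^-$ are $\langle y,\alpha\rangle$, and since the roots of $L$ span $X^*(T/T(\tau))$ these values determine $y$ --- this is the same recovery argument as in the injectivity part of Proposition~\ref{prop - extension.apartment}, now applied inside the stratum. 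To upgrade from injectivity on one apartment of $\mathcal{B}(P_{ss},k)$ to injectivity on all of $\mathcal{B}(P_{ss},k)$, one invokes once more the equivariance from step (ii) (the fibre coordinate transforms under $P_{ss}(k) = (L/Z(L))(k)$ in the expected way) together with transitivity of that group on apartments of its building.

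I expect the main obstacle to be the careful matching of coordinates and sign conventions in the third paragraph above: verifying precisely which of $x$ and $y$ controls the $U_-$-factor versus the $U_+$-factor of $\overline G_0$, and checking that the degeneration $y \in Z(\tau)$ really does kill exactly the $R_u(P)$-part of the relevant unipotent coordinate so that the first projection of $\pi_\tau^{\rm an}\circ\overline\Theta(x,y)$ collapses to the \emph{fixed} parabolic $P$ rather than varying. All of this is governed by the commutative diagram preceding the Proposition and by the seminorm formula of Proposition~\ref{prop - extension.apartment}, but the bookkeeping of opposite parabolics (the appearance of $\tau^{\rm opp}$ rather than $\tau$ in the statement of (ii)) is exactly where errors would creep in, and is the point that deserves the most care.
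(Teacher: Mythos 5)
For parts (i) and (ii) your plan is essentially the paper's argument: evaluate the seminorm formula of Proposition~\ref{prop - extension.apartment} at a point $(x,y)$ with $y$ in the boundary piece $\overline{A}^B(P)$ of the standardized compactified apartment, observe that the toric coordinates (and the $\xi_\alpha$ with $\alpha \in -\Phi(R_u(P))$) vanish exactly when $\langle y,\alpha\rangle=0$, i.e. for $\alpha \in -\Phi(R_u(P))$, and nonvanish for $\alpha\in\Phi(L)^-$, conclude via the commutative diagram that the image lies in $(X(\tau)\cap\overline G_0)^{\rm an}$, read off $\pi_\tau$ there, and propagate by equivariance (the paper translates by $G\times P$, which preserves both the stratum and the fixed parabolic $P$; your appeal to $G(k)=G_x(k)NG_x(k)$ is looser but the same propagation idea). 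The bookkeeping you flag resolves as follows: the $U_-$- and $Z$-coordinates are governed by $y$ and the $U_+$-coordinates by $x$, so it is the degeneration of the $y$-controlled coordinates that forces the $\prod_{\alpha\in-\Phi(R_u(P))}U_\alpha$-component to be trivial, whence the first component of $\pi_\tau^{\rm an}\circ\overline{\Theta}(x,y)$ is the rational point $P$, while the $x$-controlled $U_+$-part gives $\vartheta_{\tau^{\rm opp}}(x)$ (the paper verifies this against the explicit formula for $\vartheta_{\tau^{\rm opp}}$ from the proof of \cite[Lemma 3.33]{RTW1}).

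The genuine gap is in (iii). Injectivity of $\overline{\Theta}(x,\cdot)$ on a single (compactified) apartment of the stratum, combined with equivariance and transitivity of $P_{ss}(k)$ on apartments, only yields injectivity on each apartment separately; it does not exclude that two distinct points $y,y'$ lying in \emph{different} apartments of $\Bb(P_{ss},k)$ have the same image, since the group element carrying one apartment to the other does not preserve the putative common image. Moreover, your coordinate-recovery step tacitly assumes the fixed $x$ lies in the apartment $A$ of the standardization adapted to $y$, which need not hold, so the explicit formula is not directly applicable. The paper's proof repairs both points at once: given $y,y'$ with $\overline{\Theta}(x,y)=\overline{\Theta}(x,y')$, choose a maximal split torus $T\subset P$ whose compactified apartment contains \emph{both} $y$ and $y'$, pick $g\in G(k)$ with $gx\in A(T)$, and use the two-variable equivariance $\overline{\Theta}(gx,\cdot)=\overline{\Theta}(x,\cdot)g^{-1}$ to transfer the equality into the single chart, where injectivity of $\overline{\Theta}_{(T,B)}(gx,\cdot)$ on $\overline{A}^B$ (Proposition~\ref{prop - extension.apartment}) gives $y=y'$. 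You need these two moves --- a common apartment for $y,y'$ and a translation of $x$ into it in the first variable --- to turn your sketch into a complete proof.
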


\vskip 3mm
\begin{Remark} {\rm Note that in assertion (ii) above, $P$ is a $k$-rational point in ${\rm Par}_\tau(G)^\an $ whereas the point $\vartheta_{\tau^{\rm opp}}(x)$ in $ {\rm Par}_{\tau^{\rm opp}}(G)^\an$ is defined over a transcendental non-archimedean field extension. One should also be aware that, if one denotes by $|X|$ the underlying topological space of a non-Archimedean analytic space $X$, then $|X \times Y|$ is in general different from $|X| \times |Y|$. However, since $X(k) \times |Y| \subset |X \times Y|$, the formula in point (ii) does make
sense.}
\end{Remark}

\begin{proof}[Proof of Proposition \ref{prop -
geometric.description}.]
We fix a standardization $(T,B)$ of $G$ and
use the notation introduced above.

\vskip 2mm

Let us prove (i) and (ii). The partially compactified apartment
$\overline{A}^B$ intersects $\mathcal{B}(P_{ss},k)$ along the
subspace $\overline{A}^B(P)$ defined by the conditions $\alpha = 0$
for each root $\alpha$ in $-\Phi(R_u(P))$ and $\alpha >0$ for each
root $\alpha \in \Phi(L)^-$. This is the apartment of the maximal
split torus $T/T(\tau)$ of $P_{ss}$. According to the explicit
formula for $\overline{\Theta}_{(T,B)}$ in Proposition \ref{prop -
extension.apartment}, a point $(x,y) \in A \times \overline{A}^B(P)$
is mapped to the Gauss point  of
$$\left( \prod_{\alpha \in -\Phi(R_u(P))} U_\alpha \times \Bigl(\prod_{\alpha \in \Phi(L)^-} U_\alpha \times Z \times \prod_{\alpha \in \Phi(L)^+} U_\alpha\Bigr) \times \prod_{\alpha \in \Phi(R_u(P))} U_\alpha \right)^{\rm an}$$
defined by
$$|\alpha| = \langle y, \alpha \rangle \langle x,\alpha\rangle^{-1} \mbox{ for all } \ \alpha \in \Phi^-,$$
which vanishes if and only if $\alpha \in -\Phi(R_u(P))$, and by
$$|\xi_\alpha|
= \left\{\begin{array}{ll} 0 & \mbox{ if } \alpha \in -\Phi(R_u(P)) \\ \langle y ,\alpha \rangle & \mbox{ if } \ \alpha \in \Phi(L)^{-} \\
\langle x,\alpha\rangle & \mbox{ if } \ \alpha \in \Phi(L)^+ \cup
\Phi(R_u(P)). \end{array} \right.$$ By the commutative diagram
preceding our statement, we thus have
$\overline{\Theta}_{(B,T)}(x,y) \in (X(\tau) \cap \overline
G_0)^\an$. Using the explicit formula for $\vartheta_{\tau^{\rm
opp}}(x)$ from the proof of \cite[Lemma 3.33]{RTW1}, we also find
that $\pi_\tau^\an(\overline{\Theta}_{(T,B)}(x,y))$ is the point
$(\{P\},\vartheta_{\tau^{\rm opp}}(x))$ of ${\rm Par}_\tau(G)^{\rm
an} \times {\rm Par}_{\tau^{\rm opp}}(G)^{\rm an}$. We have thus
proved (i) and (ii) for the restriction of $\overline{\Theta}$ to $A
\times \overline{A}^B(P)$. Since $\pi_\tau$ is $G \times
G$-equivariant, the general case follows via translation by the
subgroup $G \times P$.

\vskip 2mm

Let us finally prove (iii). Consider two points $y,y'$ in
$\mathcal{B}(P_{ss},k)$ such that $\overline{\Theta}(x,y) =
\overline{\Theta}(x,y')$. Pick a maximal split torus $T$ of $G$
contained in $P$ such that $y$ and $y'$ belong to the closure of
$A=A(T)$; for any choice of a Borel subgroup $B$ of $P$ containing
$T$, we have $y,y' \in \overline{A}^B$. Choose also some $g \in
G(k)$ such that $x$ belongs to $g^{-1}A$. By assumption, we have
$$\overline{\Theta}(gx,y) = \overline{\Theta}(x,y)g^{-1} = \overline{\Theta}(x,y')g^{-1} = \overline{\Theta}(gx,y').$$
Since $gx, y$ and $y'$ are all contained in $\overline{A}^B$, we
deduce
$$\overline{\Theta}_{(T,B)}(gx,y) = \overline{\Theta}_{(T,B)}(gx,y')$$
and therefore $y=y'$ by injectivity of
$\overline{\Theta}_{(T,B)}(gx,-)$ on $\overline{A}^B$.
\hfill $\Box$ \end{proof}

\section{Equivariant comparison}
\label{s - equivariant comparison}

Thanks to the better understanding of the relationship between the
boundaries provided by the previous section, we can now prove our
main comparison theorem, stated as the first two points of the main
theorem in the introduction.
\begin{Thm}
\label{thm:comparison}
\begin{itemize}
\item[{\rm (i)}]~For every point $x \in \mathcal{B}(G,k)$, the map
\[\overline{\Theta}(x,\cdot):\bbart \rightarrow \Gbar\]
is a continuous, $G(k)$-equivariant embedding.
\item[{\rm (ii)}]~Assume that $k$ is locally compact.
Then $\overline{\Theta}(x,\cdot)$ is a closed embedding, and the
compactified building $\bbart$ is homeomorphic to the closure of the
image of the building $\Bb(G,k)$ under the embedding
\[\Bb(G,k) \stackrel{\Theta(x,\cdot)}{\longrightarrow}  G^\an \hookrightarrow \Gbar.\]
\end{itemize}
\end{Thm}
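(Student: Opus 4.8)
The strategy is to bootstrap from the already-established facts: continuity, equivariance and compatibility with field extensions of $\overline\Theta$ (Proposition \ref{prop - embedding}), the stratification $\bbart = \coprod_\tau \Bb(P_{ss},k)$, and the stratumwise analysis of Proposition \ref{prop - geometric.description}. For point (i), continuity of $\overline\Theta(x,\cdot)$ is immediate from Proposition \ref{prop - embedding}, and $G(k)$-equivariance (for the left translation action on $\Gbar$) follows from the relation $\overline\Theta(x,hy) = h\overline\Theta(x,y)$ once one notes that $\overline\Theta(x,\cdot)$ and $\overline\Theta(hx,\cdot)$ differ by right translation by an element of $G(k)$, hence have the same image up to the $G(k)\times G(k)$-action — but more directly, for \emph{fixed} $x$, equivariance in the second variable is exactly half of the equivariance relation. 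So the real content of (i) is injectivity, plus the fact that a continuous injection from $\bbart$ is a topological embedding.

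\textbf{Injectivity.} First I would recall that on the open dense stratum $\Bb(G,k)$, injectivity of $\Theta(x,\cdot)$ is already known (\cite[Prop. 2.11]{RTW1}, quoted in Section \ref{s - embedding}). For the boundary strata, Proposition \ref{prop - geometric.description}(iii) gives injectivity of $\overline\Theta(x,\cdot)$ on each stratum $\Bb(P_{ss},k)$ separately. What remains is to rule out collisions between different strata, and between a boundary stratum and the interior. This is where part (ii) of Proposition \ref{prop - geometric.description} does the work: if $\overline\Theta(x,y)$ lies in $X(\tau)^{\an}$ then already its image determines $\tau$ (since the strata $X(\tau)$ of $\overline G$ are pairwise disjoint locally closed subsets, so their analytifications partition $\Gbar$ accordingly — one must check $X(\tau)^{\an} \cap X(\tau')^{\an} = \varnothing$ for $\tau \neq \tau'$, which follows because $X(\tau)$ and $X(\tau')$ are disjoint subschemes of the separated scheme $\overline G$). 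Hence a point of $\Bb(P_{ss},k)$ and a point of $\Bb(P'_{ss},k)$ with $\tau(P)\neq\tau(P')$ cannot have the same image; and within a fixed type, using the $G(k)\times G(k)$-equivariance to reduce to a common apartment (via the Bruhat decomposition for compactified buildings, \cite[Prop. 4.20]{RTW1}) together with the injectivity of $\overline\Theta_{(T,B)}$ on $\overline A^B$ (Proposition \ref{prop - extension.apartment}), one concludes that the map is globally injective. I expect this gluing-across-strata bookkeeping to be the main obstacle: one has to be careful that the reduction to a single apartment genuinely works for a \emph{pair} of boundary points lying a priori in different parabolic strata, and Proposition \ref{prop - geometric.description}(ii), which extracts $\vartheta_{\tau^{\rm opp}}(x)$ and the parabolic $P$ itself from $\pi_\tau^{\an}(\overline\Theta(x,y))$, is the key device that lets one first pin down which stratum and which parabolic, then invoke stratumwise injectivity.

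\textbf{From injectivity to embedding, and point (ii).} Continuity plus injectivity do not automatically give a topological embedding for a general source space, but here when $k$ is locally compact the compactified building $\bbart$ is compact (\cite[Prop. 3.34]{RTW1}) and $\Gbar$ is Hausdorff (it is the analytification of a proper scheme, hence compact Hausdorff), so a continuous injection from a compact space to a Hausdorff space is automatically a homeomorphism onto its image, i.e. a closed embedding. This settles (i) in the locally compact case and the first clause of (ii) at once. For (i) in general (non-locally-compact $k$), I would deduce the embedding property by descent from a locally compact situation, or rather by the field-extension compatibility square in Proposition \ref{prop - embedding}: it suffices to check that $\overline\Theta(x,\cdot)$ restricted to the closure of each apartment is an embedding, and on $\overline A^B$ this is precisely Proposition \ref{prop - extension.apartment} together with the explicit seminorm formula, which manifestly reconstructs $y$ from $\overline\Theta_{(T,B)}(x,y)$ continuously; since $\bbart$ is covered by the images of the compact sets $\overline A$ under $G(k)$ and carries the quotient topology, a map that is a homeomorphism onto its image on each such piece and is globally injective is a global embedding. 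Finally, the homeomorphism statement in (ii): the closure of the image of $\Bb(G,k)$ in $\Gbar$ under $\Theta(x,\cdot)$ followed by $G^{\an}\hookrightarrow\Gbar$ contains $\overline\Theta(x,\bbart)$ because $\overline\Theta(x,\cdot)$ is continuous and $\Bb(G,k)$ is dense in $\bbart$, and is contained in it because $\overline\Theta(x,\bbart)$ is compact, hence closed, and contains $\Theta(x,\Bb(G,k))$; so the two closed sets coincide, and the embedding of (i) restricts to the desired homeomorphism.
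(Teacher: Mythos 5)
Your proposal follows essentially the same route as the paper: injectivity comes from the three consequences of Proposition \ref{prop - geometric.description} (strata of type $\tau$ land in $X(\tau)^{\mathrm{an}}$, $\pi_\tau$ of the image pins down the parabolic $P$, and injectivity holds stratumwise, with the open stratum handled by \cite[Prop.~2.11]{RTW1}), and part (ii) follows from compactness of $\bbart$, Hausdorffness of $\Gbar$, and density of $\Bb(G,k)$. Your detour through common apartments and Bruhat decomposition for same-type strata is unnecessary (Proposition \ref{prop - geometric.description}(ii) already separates them, as you note), and your covering argument for the topological embedding over general $k$ is not needed for what the paper actually proves, but the core argument matches.
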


\begin{proof} Injectivity of $\overline{\Theta}(x,\cdot)$ follows immediately
from the following three observations based on Proposition \ref{prop
- geometric.description}. For any parabolic subgroup $P$ of $G$ of
type $\tau$ :
\begin{itemize}
\item the map $\overline{\Theta}(x,\cdot)$ sends $\mathcal{B}(P_{ss},k)$ into $X(\tau)^{\rm an}$;
\item the image of $\overline{\Theta}(x,\mathcal{B}(P_{ss},k))$ under the map $\pi_\tau$ is contained into $\{P\} \times {\rm Par}_{\tau(P)^{\rm opp}}(G)^{\rm an}$;
\item the map $\overline{\Theta}(x,\cdot)$ restricts injectively to the stratum $\mathcal{B}(P_{ss},k)$ of $\overline{\mathcal{B}}(G,k)$ associated with $P$.
\end{itemize}
Note that, thanks to the first two observations, different strata
have disjoint images.

\vskip 2mm If $k$ is locally compact, the map
$\overline{\Theta}(x,\cdot)$ is closed since it is continuous,
$\overline{\mathcal{B}}(G,k)$ is compact by \cite[Prop. 3.34]{RTW1},
and $\Gbar$ is Hausdorff. Since $\Bb(G,k)$ is dense in $\bbart$ by
\cite[Prop. 3.34]{RTW1}, the last claim follows. \hfill $\Box$
\end{proof}

In order to complete the proof of our main theorem stated in the
introduction, it remains to show part (iii). Strictly speaking, this
statement deals with orbits closures while Proposition \ref{prop -
geometric.description} deals with the orbits themselves. The
relationship is in fact very neat since it is well-known that, with
our notation, we have by \cite[Th. 3.9]{ConciniSpringer}:
$$\overline{X(\tau)} = \bigcap_{i \in \Delta \setminus \tau} D_i,$$
hence $$\overline{X(\tau)} = \bigcup_{\tau' \leqslant \tau}
X(\tau').$$


By Prop. \ref{prop - geometric.description} (ii), this observation
implies that, for every parabolic subgroup $P$ of type $\tau$,
$$\Bb(P_{ss},k) \subset X(\tau)^{\rm an} \quad {\rm and} \quad \overline\Bb(P_{ss},k) \subset \overline{X(\tau)}^{\rm an}$$
since $$\overline{\mathcal{B}}(P_{ss},k) = \bigcup_{P' \in {\rm
Par}(G)(k), \ P' \subset P} \mathcal{B}(P'_{ss},k)$$ by
\cite[Theorem 4.1 and Example 3.9.(ii)]{RTW1}.

\vskip 5mm We conclude by a strengthening of assertion (i) in our
Theorem, thereby answering a question raised by one of the referees.

\begin{Prop} The map $$\overline{\Theta} : \mathcal{B}(G,k) \times \overline{\mathcal{B}}(G,k) \rightarrow \overline{G}^{\rm an}$$
is injective.
\end{Prop}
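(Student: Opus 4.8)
The plan is to reduce injectivity of the full map $\overline\Theta$ to injectivity of each restriction $\overline\Theta(x,\cdot)$ already obtained in Theorem \ref{thm:comparison} (i), combined with a bit of equivariance bookkeeping. Concretely, suppose $\overline\Theta(x,y) = \overline\Theta(x',y')$ with $(x,y),(x',y') \in \mathcal B(G,k) \times \overline{\mathcal B}(G,k)$. First I would identify the strata: by Proposition \ref{prop - geometric.description} (i) and the fact that distinct boundary strata of $\overline G$ are disjoint, the point $y$ lies in $\mathcal B(P_{ss},k)$ and $y'$ in $\mathcal B(P'_{ss},k)$ for parabolics of the same type $\tau$; applying $\pi_\tau^\an$ and using Proposition \ref{prop - geometric.description} (ii), the first coordinate of $\pi_\tau^\an(\overline\Theta(x,y)) = (P,\vartheta_{\tau^{\rm opp}}(x))$ forces $P = P'$, and the second coordinate forces $\vartheta_{\tau^{\rm opp}}(x) = \vartheta_{\tau^{\rm opp}}(x')$. (For $\tau = \Delta$, i.e.\ the open stratum $\mathcal B(G,k) \subset \overline{\mathcal B}(G,k)$, one uses instead that $\overline\Theta(x,\cdot)$ restricted to $\mathcal B(G,k)$ is $\Theta(x,\cdot)$, landing in $G^\an$, together with the analogous statement for $\vartheta_\emptyset$.)

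The remaining task is to show $x = x'$. Here the honest content is that $\vartheta_{\tau^{\rm opp}}$ need \emph{not} be injective on $\mathcal B(G,k)$ when $\tau^{\rm opp} \neq \emptyset$ — it factors through the retraction of the building onto the building of the Levi quotient — so we cannot immediately conclude from $\vartheta_{\tau^{\rm opp}}(x) = \vartheta_{\tau^{\rm opp}}(x')$. Instead I would exploit the full data of the equality $\overline\Theta(x,y) = \overline\Theta(x,y')$: having shown $y' $ and $x'$ determine the same parabolic $P$ and the same point $\vartheta_{\tau^{\rm opp}}$, I would use $G(k)$-equivariance in the first variable, $\overline\Theta(gx,y) = \overline\Theta(x,y)g^{-1}$, to translate into a situation where $x,x'$ and $y,y'$ all lie in one partially compactified apartment $\overline A^B$, as in the proof of Proposition \ref{prop - geometric.description} (iii) — choose a maximal split torus $T \subset P$ with $y,y'$ in the closure of $A(T)$ and, after translating, arrange $x,x' \in A(T)$ as well. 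On that apartment the map is given by the explicit seminorm formula of Proposition \ref{prop - extension.apartment}: the values $|\xi_\alpha|$ recover $\langle x,\alpha\rangle$ for $\alpha \in \Phi^+ \cup \Phi(R_u(P))$ and the values $|\alpha|$ recover $\langle y,\alpha\rangle\langle x,\alpha\rangle^{-1}$ for $\alpha \in \Phi^-$ not in $-\Phi(R_u(P))$ — so from equality of the seminorms one reads off $\langle x,\alpha\rangle = \langle x',\alpha\rangle$ for all $\alpha \in \Phi^+$, and since $\Phi^+$ spans $X^*(T)$ this gives $x = x'$. Then $y = y'$ follows from the injectivity of $\overline\Theta_{(T,B)}(x,\cdot)$ on $\overline A^B$ already recorded in Proposition \ref{prop - extension.apartment}.

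The step I expect to be the main obstacle is the reduction to a common apartment in the first variable while keeping control of the boundary point $y$: one must check that after the translation by $g \in G(k)$ bringing $x$ into $A(T)$, the point $y$ (which lives at infinity) still lies in the closure $\overline{A(T)}^B$ for a suitable Borel, so that the apartment-level formula of Proposition \ref{prop - extension.apartment} applies simultaneously to both pairs. This is exactly the kind of compatibility packaged in Remark \ref{rmk-standardization} and Lemma \ref{lemma.sequences}, so I would invoke those: pick $T$ with $y,y' \in \overline{A(T)}$, then use that $G_y(k)$ (equivalently, $P(k)$, since $y \in \mathcal B(P_{ss},k)$) acts transitively enough to absorb the translation, or simply mimic verbatim the torus-choosing argument of Proposition \ref{prop - geometric.description} (iii) where precisely this configuration was already engineered. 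Once everyone sits in one $\overline A^B$, the conclusion is the short explicit computation above.
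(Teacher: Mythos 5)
Your opening moves agree with the paper: separating the strata via Proposition \ref{prop - geometric.description}(i)--(ii), getting $P=P'$ and $\vartheta_{\tau^{\rm opp}}(x)=\vartheta_{\tau^{\rm opp}}(x')$ from $\pi_\tau$, and finishing with $y=y'$ via Proposition \ref{prop - geometric.description}(iii) once $x=x'$ is known. But the step you yourself flag as the main obstacle is a genuine gap, and it is exactly where the real work lies. Your plan is to translate by a single $g\in G(k)$ so that $x,x'$ and $y,y'$ all sit in one partially compactified apartment $\overline{A}^B$ and then read off $x=x'$ from the seminorm formula of Proposition \ref{prop - extension.apartment}. This configuration cannot be engineered in general: choosing $T\subset P$ puts $y,y'$ in one $\overline{A(T)}^B$, and equivariance in the first variable turns the hypothesis into $\overline{\Theta}(gx,y)=\overline{\Theta}(gx',y')$, but a single $g$ can bring only one of $x,x'$ into $A(T)$; four points (two in the building, two in a boundary stratum) need not lie in a common compactified apartment, and neither Remark \ref{rmk-standardization} nor Lemma \ref{lemma.sequences} supplies this. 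The argument of Proposition \ref{prop - geometric.description}(iii) that you propose to mimic works precisely because there is only \emph{one} point in the first variable. A further inaccuracy: $\vartheta_{\tau^{\rm opp}}$ is injective not only for $\tau^{\rm opp}=\varnothing$ but for every nondegenerate type (this is [RTW1, Prop.~3.29], which the paper invokes); so whenever $P$ contains no simple factor of $G$ the equality $\vartheta_{\tau^{\rm opp}}(x)=\vartheta_{\tau^{\rm opp}}(x')$ already gives $x=x'$ and no apartment computation is needed.

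The genuinely hard cases, which your proposal does not cover, are the open stratum and the degenerate types (where $P$ contains a simple factor $G_1$ of $G$), and both require input beyond the cited results. For the open stratum the paper first proves that $\Theta$ itself is injective on $\mathcal{B}(G,k)\times\mathcal{B}(G,k)$ --- a statement it explicitly notes was not recorded in [RTW1], so it cannot simply be quoted --- by passing to a non-archimedean extension to write $x'=g'x$, $y=hx$, $y'=h'x$ and then arguing with $\vartheta$ and the affinoid stabilizers $G_x$ (the equality $G_{g'x}=G_x$ forces $g'x=x$); this is not an apartment argument, and your parenthetical appeal to ``the analogous statement for $\vartheta_\varnothing$'' has no content, since $\pi_\tau$ carries no information on the open orbit. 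For a degenerate type the paper writes $G=G_1\times G_2$ with $P=G_1\times P_2$, observes that $\pi_\tau$ only recovers the $G_2$-component of $x$, and then constructs, by projecting the fiber of $\pi_\tau$ over $(P,\vartheta_{\tau_2}(x_2))$ onto $G_1^{\rm an}$, an auxiliary map which it identifies (first on apartments, then by equivariant translation) with $\Theta_{G_1\otimes_k K}$; injectivity of that map, i.e.\ the open-stratum case for $G_1$, then gives $x_1=x_1'$ and $y_1=y_1'$. These two steps are the substance of the paper's proof and are missing from your proposal.
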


\begin{proof} (Step 1) Let us first check that the map $\Theta :
\mathcal{B}(G,k) \times \mathcal{B}(G,k) \rightarrow G^{\rm an}$ is
injective, a fact which we did not notice in \cite{RTW1}. Consider
two points $(x,y), (x',y') \in \mathcal{B}(G,k) \times
\mathcal{B}(G,k)$ such that $\Theta(x,y) = \Theta(x',y')$. By
\cite[Lemma 2.10]{RTW1}, this condition still holds after an
arbitrary non-Archimedean extension of $k$ and therefore we may
assume that there exists $g', h,h' \in G(k)$ such that $x'=g'x,
y=hx$ and $y'=h'x$. By \cite[Proposition 2.11]{RTW1}, we get
$$h\Theta(x,x) = h'\Theta(x,x){g'}^{-1},$$ hence $$h^{-1}h'
\vartheta(x) = \vartheta(x)g' \ \ \ \ {\rm and} \ \ \ h^{-1}h'G_x =
G_x g'$$ by \cite[Definition 2.9 and Proposition 2.4.(i)]{RTW1}. We
thus can write $g' = h^{-1}h's$ for some suitable $s \in G_x(k)$ and
deduce $$g'G_x = h^{-1}h'G_x = G_x g', \ \ \ {\rm hence} \ \
G_{g'\cdot x} = G_x.$$ This implies $g'\cdot x =x$ by
\cite[Corollary 2.5]{RTW1}, and therefore $g' \in G_x(k)$. It
follows that $h^{-1}h'$ also belongs to $G_x(k)$, hence $x' = g'x =
x$ and $y' = h'x = h(h^{-1}h')x = hx = y$.

\vskip2mm (Step 2) Let us now prove that $\overline{\Theta}$ is also
injective. It follows easily from Proposition 3.1 (i) and (ii) that
the map $\overline{\Theta}$ separates the strata $\mathcal{B}(G,k)
\times \mathcal{B}(P_{ss},k)$ associated with the parabolic
subgroups $P$ of $G$, so it is enough to check that its restriction
to each stratum $\mathcal{B}(G,k) \times \mathcal{B}(P_{ss},k)$ is
injective. We remark that it is easy if the parabolic subgroup $P$
does not contain any simple factor of $G$, i.e. if its type $\tau =
\tau(P)$ is nondegenerate. In this case indeed, the opposite type
$\tau^{\rm opp}$ is also nondegenerate, hence the map
$\vartheta_{\tau^{\rm opp}}$ from $\mathcal{B}(G,k)$ to ${\rm
Par}_{\tau^{\rm opp}}(G)^{\rm an}$ is injective \cite[Proposition
3.29]{RTW1} and thus the conclusion follows from Proposition
3.1.(ii) and (iii).

\vskip2mm (Step 3) In general, one can write $G = G_1 \times G_2$
for some semisimple groups of adjoint type $G_1$ and $G_2$ such that
$P = G_1 \times P_2$, where $P_2$ is a nondegenerate parabolic
subgroup of $G_2$ of type $\tau_2$. The schemes ${\rm Par}_\tau(G)
\times {\rm Par}_{\tau^{\rm opp}}(G)$ and ${\rm Par}_{\tau_2}(G_2)
\times {\rm Par}_{\tau_2^{\rm opp}}(G_2)$ are canonically
isomorphic, the building of $G$ is the product of the buildings of
$G_1$ and $G_2$, and $\vartheta_\tau = \vartheta_{\tau_2} \circ
pr_2$. Fix a point $x_2$ in $\mathcal{B}(G_2,k)$. By Proposition
3.1(ii), the restriction of $\overline{\Theta}$ to
$$\mathcal{B}(G_1,k) \times \{x_2\} \times \mathcal{B}(P_{ss},k) =
\mathcal{B}(G_1,k) \times \{x_2\} \times \mathcal{B}(G_1,k) \times
\mathcal{B}(P_{2,ss},k)$$ is a $G_1 \times P_{ss}$-equivariant map
whose image is contained in the fiber of $\pi_\tau$ over the point
$(P,\vartheta_{\tau_2}(x_2))$, which is a canonically split torsor
under $G_1^{\rm an} \times (P_{2,ss}/Z(P_{2,ss}))^{\rm an}$.
Projecting onto ${\rm G}_1^{\rm an}$, we thus obtain a $G_1 \times
G_1$-equivariant map $$M : \mathcal{B}(G_1,k) \times
\mathcal{B}(G_1,k)  \times \mathcal{B}(P_{2,ss},k) \rightarrow (G_1
\otimes_k K)^{\rm an},$$ where $K$ is equal to the completed residue
field $\mathcal{H}(\vartheta_{\tau_2}(x_2))$. Choosing a
standardization $(B,T) = (B_1 \times B_2,T_1 \times T_2)$ of $G$ and
computing in the associated big cell as in the proof of Proposition
3.1, we observe that the restriction of $M$ to $$A \times
\overline{A}^{B}(P) = A(T_1) \times A(T_2) \times A(T_1) \times
\overline{A}^{B_2}(P_2)$$ factors through the canonical projection
onto $A(T_1) \times A(T_1)$, and that the induced map coincides with
the restriction of $\Theta_{G_1 \otimes_k K}$ to $A(T_1) \times
A(T_1)$. By translation, we deduce that $M$ factors through the
projection onto $\mathcal{B}(G_1,k) \times \mathcal{B}(G_1,k)$ and
that the induced map to $G_{1,K}^{\rm an}$ coincides with
$\Theta_{G_1 \otimes_k K}$.

\vskip2mm (Step 4) We can now finish the proof. We make the
identification $$\mathcal{B}(G,k) \times \mathcal{B}(P_{ss},k) =
\mathcal{B}(G_1,k) \times \mathcal{B}(G_2,k) \times
\mathcal{B}(G_1,k) \times \mathcal{B}(P_{2,ss},k)$$ and consider two
points $(x_1,x_2,y_1,y_2), (x_1',x_2',y_1',y_2')$ in the product
such that $\overline{\Theta}((x_1,x_2),(y_1,y_2))=
\overline{\Theta}((x_1',x_2'),(y_1',y_2'))$. Projecting this
equality by $\pi_\tau$, we deduce $\vartheta_{\tau_2}(x_2) =
\vartheta_{\tau_2}(x_2')$, hence $x_2'=x_2$. Combining steps 1 and
3, we then get $(x_1',y_1') = (x_1,y_1)$. We can then apply
Proposition 3.1.(iii) to derive $(y_1',y_2') = (y_1,y_2)$, hence
$(x_1',x_2',y_1',y_2') = (x_1,x_2,y_1,y_2)$. \hfill $\Box$
\end{proof}

\noindent {\bf Further questions:} Our constructions are all
Galois-equivariant and can be descended to ground fields over which
the group $G$ need not be split. On the one hand, it seems to us
that wonderful compactifications of non-split groups are less
explicitly described in the literature, presumably due to lack of
representation-theoretic motivation. On the other hand, descent in
Bruhat-Tits theory is a central topic. The geometric description of
Satake-Berkovich compactifications of Bruhat-Tits buildings could be
a useful tool to describe wonderful compactifications of non-split
groups.

Another interesting line of further research is the generalization
of our results to other equivariant compactifications of the
reductive group $G$ \cite{Timashev}.

\vskip 5mm

\noindent {\bf Acknowledgements.} We thank the anonymous referees
for their questions and  remarks, which helped us to improve the
paper in many ways. The first author is very grateful to the
Humboldt Foundation for its support, he would like to thank the
participants of the Institut Fourier workshop ``Symmetric spaces"
held a few years ago and the Institut f\"ur Mathematik (FB12) of the
Goethe-Universit\"at for his optimal stay in Frankfurt during the
Fall Semester of 2017. The second author was partially supported by
the GeoLie project of the Agence Nationale de la Recherche (project
ANR-15-CE40-0012). The third author would like to thank Deutsche
Forschungsgemeinschaft for supporting this joint work under grant WE
4279/7.

\providecommand{\bysame}{\leavevmode\hbox to3em{\hrulefill}\thinspace}
%
%

\bibliographystyle{amsalpha}

\begin{thebibliography}{KMRT98}

\bibitem[Ber90]{Berkovich} V.G.~Berkovich, \emph{Spectral theory and analytic geometry over non-archimedean fields}. Mathematical Surveys and Monographs, vol. 33, American Mathematical Society, Providence, 1990.
\MR{1070709}

\bibitem[Bri98]{BrionCMH} M.~Brion, \emph{The behaviour at infinity of the {B}ruhat decomposition}, Comment. Math. Helv.~\textbf{73} (1998), no. 1, 137--174.
\MR{1610599}

\bibitem[Bri03]{BrionJAG} M.~Brion, \emph{Group completions via {H}ilbert schemes}, J. Algebraic Geom.~\textbf{12}
(2003), no. 4, 605--626. \MR{1993758}

\bibitem[BrT72]{BruhatTits1} F.~Bruhat and J.~Tits, \emph{Groupes r\'eductifs sur un corps local, I. Donn\'ees radicielles valu\'ees}, Publ. Math. Inst. Hautes \'Etudes Sci.~\textbf{41} (1972), 5--251.
\MR{0327923}

\bibitem[BrT84]{BruhatTits2} F.~Bruhat and J.~Tits, \emph{Groupes r\'eductifs sur un corps local, II. Sch\'emas en groupes. Existence d'une donn\'ee radicielle valu\'ee}, Publ. Math. Inst. Hautes \'Etudes Sci.~\textbf{60} (1984), 197--376.
\MR{756316}

\bibitem[CP83]{ConciniProcesi} C.~de Concini and C.~Procesi, \emph{Complete symmetric varieties}. In: Invariant theory ({M}ontecatini, 1982), pp. 1--44, Lecture Notes in Math., vol. 996,
Springer, 1983.
\MR{718125}

\bibitem[CS99]{ConciniSpringer} C.~de Concini and T.~A.~Springer, \emph{Compactification of symmetric varieties}, Transformation Groups~\textbf{4} (1999), no. 2--3, 273--300.
\MR{1712864}

\bibitem[Dem77]{Demazure} M.~Demazure, \emph{Automorphismes et d\'eformations des vari\'et\'es de {B}orel}, Invent. Math.~\textbf{39} (1977), 179--186.
\MR{0435092}

\bibitem[EJ08]{EvensJones} S.~Evens and B.F.~Jones, \emph{On the wonderful compactification}, preprint 2008.
\href{https://arxiv.org/abs/0801.0456}{arXiv: 0801.0456}

\bibitem[RTW10]{RTW1} B.~R\'emy, A.~Thuillier, and A.~Werner, \emph{Bruhat-Tits theory from Berkovich's point of view I. Realizations and compactifications of buildings}, Ann. Sci. \'Ec. Norm. Sup\'er. (4)~\textbf{43} (2010), no. 3, 461--554.
\MR{2667022}

\bibitem[RTW12]{RTW2} B.~R\'emy, A.~Thuillier, and A.~Werner, \emph{Bruhat-Tits theory from Berkovich's point of view II. {S}atake compactifications of buildings}, J. Inst. Math. Jussieu~\textbf{11} (2012), no. 2, 421--465.
\MR{2905310}

\bibitem[Str87]{Strickland} E.~Strickland, \emph{A vanishing theorem for group compactifications}, Math. Ann.~\textbf{277}~(1987), no. 1, 165--171.
\MR{884653}

\bibitem[Ti03]{Timashev} D.A. Timashev, \emph{Equivariant compactifications of reductive groups}, Sb. Math. \textbf{194}~(2003), no. 3--4,  589--616.
\MR{1992080}

\end{thebibliography}
\bibliographymark{References}

\end{document}